\def\Xint#1{\mathchoice
{\XXint\displaystyle\textstyle{#1}}%
{\XXint\textstyle\scriptstyle{#1}}%
{\XXint\scriptstyle\scriptscriptstyle{#1}}%
{\XXint\scriptscriptstyle%
\scriptscriptstyle{#1}}%
\!\int}
\def\XXint#1#2#3{{\setbox0=\hbox{$#1{#2#3}{%
\int}$ }
\vcenter{\hbox{$#2#3$ }}\kern-.6\wd0}}
\def\barint{\, \Xint -} 
\def\bariint{\barint_{} \kern-.4em \barint}
\def\bariiint{\bariint_{} \kern-.4em \barint}
\renewcommand{\iint}{\int_{}\kern-.34em \int} 
\renewcommand{\iiint}{\iint_{}\kern-.34em \int} 
\DeclareMathAlphabet{\mathcal}{OMS}{cmsy}{m}{n}
\theoremstyle{plain}
\newtheorem{theorem}{Theorem}[section]
\newtheorem{definition}[theorem]{Definition}
\newtheorem{lemma}[theorem]{Lemma}
\newtheorem{example}[theorem]{Example}
\newtheorem{corollary}[theorem]{Corollary}
\newtheorem{proposition}[theorem]{Proposition}
\theoremstyle{definition}
\newtheorem{remark}[theorem]{Remark}
\newcommand{\R}{\mathbb{R}}
\newcommand{\C}{\mathbb{C}}
\newcommand{\N}{\mathbb{N}}
\newcommand{\Z}{\mathbb{Z}}
\newcommand{\be}{\beta}
\newcommand{\e}{\epsilon}
\newcommand{\ga}{{\gamma}}
\newcommand{\si}{\sigma}
\newcommand{\td}{\tilde}
\newcommand{\nb}{\nabla}
\newcommand{\p}{\partial}
\newcommand{\la}{\langle}
\newcommand{\ra}{\rangle}
\newcommand{\les}{\lesssim}
\renewcommand{\:}{\colon}
\let\div\relax
\DeclareMathOperator{\div}{div}
\let\tilde\relas
\newcommand{\tilde}[1]{\widetilde{#1}}
\newcommand{\I}{{\infty}}  
\newcommand{\cmz}[1]{{\color{black} #1}} 
\newcommand{\da}[1]{{\color{black}{#1}}}
\newcommand{\zb}[1]{{\color{black}{#1}}}
\newcommand{\EQ}[1]{\begin{equation}\begin{split} #1 \end{split}\end{equation}}
\numberwithin{equation}{section}
\setlist[enumerate]{leftmargin=*}
\title{Remarks on sparseness and regularity of~Navier--Stokes solutions}  
\author[D. Albritton]{Dallas Albritton}
\address{School of Mathematics, Institute for Advanced Study, 1 Einstein Dr., Princeton, NJ 08540, USA}
\email{dallas.albritton@ias.edu}
\author[Z. Bradshaw]{Zachary Bradshaw}
\address{Department of Mathematics, University of Arkansas, Fayetteville, AR 72701}
\email{zb002@uark.edu}
\begin{document}
\begin{abstract}
The goal of this paper is twofold. First, we give a simple proof that sufficiently sparse Navier--Stokes solutions do not develop singularities. This provides an alternative to the approach of~\cite{Grujic2013}, which is based on analyticity and the `harmonic measure maximum principle'. Second, we analyze the claims in~\cite{algebraicreduction,grujic2019asymptotic} that \emph{a priori} estimates on the sparseness of the vorticity and higher velocity derivatives reduce the `scaling gap' in the regularity problem.
\end{abstract}

\maketitle


\parskip   2pt

\section{Introduction}

\da{
In this paper, we offer an interpretation of the recent program of Gruji{\'c} and coauthors on the relationship between sparseness and regularity of Navier--Stokes solutions, see~\cite{Grujic2013,algebraicreduction,grujic2019asymptotic}. These papers connect sparseness to regularity by way of analyticity and explore the implications of this connection. The most notable implication is the notion of `asymptotic criticality', wherein the
scaling properties of certain sets (`sparseness classes') improve as higher derivatives are considered (we elaborate  {in Section~\ref{subsection:examples}}).
As the number of derivatives  {tends to infinity}, 
 {the} \emph{a priori}  {controlled}
supercritical quantities associated with the aforementioned  {sparseness classes seems to} approach criticality~\cite{grujic2019asymptotic}. In this sense, the scaling gap is  {said to be} closed \textit{within the sparseness framework}.   {This prospect is intriguing, as} the scaling gap is a fundamental barrier to establishing global regularity for the Navier--Stokes equations.

The aims of this paper are twofold:  First, we offer new approaches to the  estimates introduced in~\cite{Grujic2013} which are the foundation of the later works~\cite{algebraicreduction,grujic2019asymptotic}. Second, we analyze the  {sparseness classes} which  {seem to} bridge the scaling gap in  \cite{algebraicreduction,grujic2019asymptotic} by way of concrete examples. In the first direction, we reframe the analyticity-based argument of~\cite{Grujic2013} in terms of methods more widely known in the fluids community. 
Furthermore, we introduce a {frequency} version of sparseness which has the benefit of involving fewer parameters. We hope this simplified setting will make the higher derivative estimates in~\cite{grujic2019asymptotic} more transparent. In the second direction,  we  {seek to}
understand exactly what is happening when the sparseness classes of \cite{grujic2019asymptotic}  {seem to} approach criticality.   In particular,  we show for a broad class of concrete examples  {of vector fields} 
that the framework introduced in \cite{grujic2019asymptotic} does not rule out singularities beyond those ruled out by membership in $L^\infty_t L^2_x$.  Our examples suggest that membership in the  {sparseness classes} 
of~\cite{grujic2019asymptotic} may not be enough to meaningfully bridge the scaling gap insofar as regularity is concerned.  
 
}

\subsection{Introduction to sparseness}

\da{We give an introductory discussion of the notion of sparseness and discuss its connection to the regularity problem.} A measurable function $f : \R^d \to \R$ satisfying
\begin{equation}
    \label{eq:naivesparseness}
    \left| \Big\lbrace |f| > \frac{1}{2} \| f \|_{L^\infty} \Big\rbrace \cap B_{\ell}(x_0) \right|  \leq \varepsilon |B_\ell|
\end{equation}
for all $x_0 \in \R^d$ is said to be \emph{$\varepsilon$-sparse at scale $\ell$}.  \da{We will soon define a more flexible notion of sparseness and only consider \eqref{eq:naivesparseness} briefly for illustrative purposes.} Heuristically, sparseness means that the set where the function $f$ is substantial is relatively small in measure.  
In particular, sparseness ensures that $f$ concentrates on a small set. For example,
\begin{equation}
    f = \frac{1}{2} + \frac{1}{2} \mathbf{1}_{(-\frac{\varepsilon}{2},\frac{\varepsilon}{2})}
\end{equation}
is $\varepsilon$-sparse at scale $\ell = 1$ in dimension $d=1$. 

Sparseness is a \emph{non-linear} requirement. In particular, it is not amenable to a vector space structure, although it is invariant under multiplying $f$ by a prefactor. It is also invariant under translation and rotation, whereas dilation simply dilates $\ell$.

The original motivation for the PDE analysis of sparseness of  Navier--Stokes solutions comes from the phenomenology of turbulence, see \cite{Grujic2013}. Loosely speaking, turbulent flows are observed to be concentrated on vortex filaments, which are observed to be sparse.  


A key feature of the above class is that solutions of the heat equation with sparse initial data $f$ rapidly decrease in norm after time $T = C \ell^2$:
\begin{equation}
    \label{eq:heatdropsinnormintro}
    \| e^{t \Delta} f \|_{L^\infty} \leq \frac{3}{4} \| f \|_{L^\infty}, \quad t \geq T.
\end{equation}
Sparseness requires that $f$ has substantial variation or `large gradient' over scale $\ell$, and the heat evolution smooths this variation by decreasing the function. Since the heat kernel is roughly concentrated on a ball of radius $C \sqrt{t}$, the variation on scale $\ell$ is not seen until time $T$. These heuristics essentially comprise the proof we write in Lemma~\ref{lemma.heat}. It is also possible to take a Fourier-analytic perspective: a substantial amount of the function is concentrated in frequencies $\geq C \ell^{-1}$, see Definition~\ref{def:frequencysparseness} and Lemma~\ref{lemma.heat.frequency}.

Since $L^\infty$ is a subcritical space for the Navier--Stokes equations, it is reasonable to expect that, if the initial velocity $u_0$ is sufficiently sparse, then the Navier--Stokes solution also undergoes a rapid decrease in norm, analogous to~\eqref{eq:heatdropsinnormintro}, at least while the non-linearity is perturbative. In summary, sufficient sparseness should allow the Navier--Stokes solution to be continued.  This is essentially the proof of Theorem~\ref{thm:regcrit} below.

We emphasize that the simple heuristics above are robust enough to be translated into a simple proof.

The route taken by Gruji{\'c}~\cite{Grujic2013} to the regularity criterion introduced above is different. The rapid decrease in~\eqref{eq:heatdropsinnormintro} is observed by showing that the solution is spatially analytic with radius $C \sqrt{t}$. Hence, it can be extended to an analytic function in a strip in $\C^d$, on which its real and imaginary parts are harmonic and thus satisfy a certain `harmonic measure majorization principle'. Extending this argument to the non-linear setting is done by demonstrating analytic smoothing of Navier-Stokes solutions, which can be technical, \da{especially at the level of higher derivative estimates in \cite{grujic2019asymptotic}. We hope that this alternative approach or the frequency
version that we introduce in Section 2 will help streamline these estimates.  }

\da{It is well known that small scale activity is essential to singularity formation. This can be quantified in terms of an } 
\emph{a priori} lower bound on the sparseness of a singular Navier--Stokes solution. We explain the idea at the level of functions $f \in L^1 \cap L^\infty$, though any two Lebesgue spaces will do. In terms of dimensional analysis, we have
\begin{equation}
	\big[ \| f \|_{L^1} \big] = ML^d, \quad \big[ \| f \|_{L^\infty} \big] = M.
\end{equation}
The unique way, up to a prefactor, to form a length scale from these two quantities is
\begin{equation}
	\ell_0 = \left( \frac{ \| f \|_{L^1}}{ \| f \|_{L^\infty} } \right)^{\frac{1}{d}}.
\end{equation}
 Heuristically, $\ell_0$ is a length scale at which the two quantities are in balance. We make sense of this with Chebyshev's inequality:
\begin{equation}
    \label{eq:chebyshev}
	\lambda \left| \{ |f| > \lambda \} \right| \leq \| f \|_{L^1}, \quad \lambda > 0.
\end{equation}
By choosing $\lambda = \| f \|_{L^\infty}/2$, we conclude
\begin{equation}
	\left| \Big\lbrace |f| > \frac{1}{2} \| f \|_{L^\infty} \Big\rbrace \right| \leq 2 \frac{\| f \|_{L^1}}{\| f \|_{L^\infty}} = 2 \ell_0^d,
\end{equation}
an upper bound on the volume where the function is substantial.\footnote{In the opposite direction, one obtains an upper bound on the sparseness (a lower bound on the volume where the function is substantial) by controlling $\| f \|_{L^\infty}$ from below and $\| \nabla f \|_{L^\infty}$ from above.} In particular, we have that the solution is sparse on length scales $\ell = a \ell_0$ with $a \gg 1$:
\begin{equation}
	\left| \Big\lbrace |f| > \frac{1}{2} \| f \|_{L^\infty} \Big\rbrace \cap B_\ell(x_0) \right| \leq C_d a^{-d} |B_\ell|.
\end{equation}
In the context of singular Navier--Stokes solutions, whose $L^2$ norm is controlled, the above reasoning demonstrates that the length scale of sparseness is guaranteed to shrink with a certain rate as $t \to T^*$, see Theorem~\ref{thm:apriori}.


We now formulate the main results of this paper. First, we define

\begin{definition}[$L^p$-sparseness]
\label{def:lpsparseness}
Let $1 \leq p \leq \infty$, $ \varepsilon,  \be\in (0,1)$, and $ \ell>0$. A vector field $u_0 \in L^p(\R^d)$ is \emph{$(\varepsilon,\be,\ell)$-sparse in $L^p$} if there exists a measurable set $S$ such that
\begin{equation}
    \|u_0\|_{L^p (S^c)} <    \beta  \|u_0\|_{L^p}
\end{equation}
and 
\begin{equation}
\sup_{x_0\in \R^d} \frac {|S\cap B_{  \ell }(x_0) |} {|B_{ \ell}(x_0) |} \leq \varepsilon.    
\end{equation}
\end{definition} 
\da{This definition can be extended to spaces other than the Lebesgue spaces. It generalizes the volumetric  notion of sparseness used in  \cite{algebraicreduction,grujic2019asymptotic} but is different than the one-dimensional version in \cite{Grujic2013}.}

Let $d \geq 3$ and $p \in (d,+\infty]$. We define the \emph{guaranteed existence time} $\bar{T}(d,p,\| u_0 \|_{L^p}) > 0$ by
\begin{equation}
    \label{eq:guaranteedexistencetimedef}
    \| u_0 \|_{L^p} \bar{T}^{\frac{1}{2} (1 - \frac{d}{p})} = c_p,
\end{equation}
where $c_p \in (0,1]$ is a small constant.
For each $u_0 \in L^p_\sigma$, there exists a unique mild Navier--Stokes solution $u \in C((0,\bar{T}];L^p)$ which furthermore satisfies\footnote{When $p < +\infty$, we have also $C([0,\bar{T}];L^p)$.}
\begin{equation}
    \sup_{t \in (0,\bar{T})} \| u(\cdot,t) \|_{L^p} \leq 2 \| u_0 \|_{L^p}.
\end{equation}
\da{Classical bilinear estimates (see, for example,~\cite[Chapter 5]{Tsaibook}) imply that
for $t \in (0,\bar{T})$, we have
\begin{equation}\label{ineq.Duhamel}
\bigg\|  \int_0^t e^{(t-s)\Delta}\mathbb P \nb \cdot (u\otimes u)\,ds\bigg\|_{L^p}
\leq C_1 t^{\frac{1}{2}(1-\frac{d}{p})}   \|u_0\|_{L^p}^2,
\end{equation}
for a constant \da{$C_1 \geq (2c_p)^{-1}$} that will play a role in what follows.}

\da{The following theorem is thematically similar to the main result in \cite{Grujic2013}, which is the foundation for later work \cite{algebraicreduction,grujic2019asymptotic}.}
\begin{theorem}[Regularity criterion]
    \label{thm:regcrit}
    Let $u_0\in L^p_\si(\R^d)$ and $T_0 > \bar{T}(d,p,\| u_0 \|_{L^p})$ (otherwise, the conclusion is trivial).
    Define $\gamma$ according to 
\begin{equation}
    \| u_0 \|_{L^p} T_0^{\frac{1}{2}(1-\frac{d}{p})} \gamma = c_p,
\end{equation}
and in particular, $\gamma \in (0,1).$  \da{Define $T_1$
 according to
\begin{equation}
    \label{eq:Tdef}
    \| u_0 \|_{L^p} T_1^{\frac{1}{2} (1 - \frac{d}{p})} = \frac{\gamma}{2C_1},
\end{equation}
\da{which when compared to~\eqref{eq:guaranteedexistencetimedef} ensures that $T<\bar T$.}}
There exists $C_0 \geq 1$, depending only on $p \in (d,+\infty]$ and the dimension $d$, such that, if $u_0$ is $(\varepsilon,\be,\ell)$-sparse in $L^p$, where  \da{$\ell = \sqrt{T_1} \bar{\ell}$} and the dimensionless parameters satisfy
\begin{equation}
    \label{eq:introreqsonelletc}
  \bar{\ell}^2 \geq C_0 {\log (C_0/\gamma)}; \quad  \da{\be}  = C_0^{-1} \gamma;  \quad  \da{\varepsilon}^{1-\frac{1}{p}} \leq C_0^{-1} \gamma / \bar{\ell}^d
\end{equation}
then the unique strong solution with initial data $u_0$ extends past time $T_0$. 
\end{theorem}

\da{
The above theorem may be regarded as a generalization of Gruji{\'c}'s  work~\cite{Grujic2013} to the $L^p$ scale.\footnote{We mention two differences. First, the paper~\cite{Grujic2013} is written in terms of so-called `linear sparseness', although the further developments~\cite{algebraicreduction,grujic2019asymptotic} focused on `volumetric sparseness' in $L^\infty$, which is akin to Definition~\ref{def:lpsparseness} and which we focus on here. Second, the sparsity in the regularity criterion in~\cite[Theorem 4.1]{Grujic2013} and subsequent papers is imposed with a `time lag', so their hypotheses are not the same as ours.} However, the main novelty is that the proof avoids analyticity  {and the harmonic measure maximum principle} in favor of methods widely used in the fluids literature. In principle, it can be adapted to  generalize~\cite{algebraicreduction,grujic2019asymptotic}.
}

\da{We obtain as  a corollary a blow-up criterion which may be regarded as {saying that a singular solution can only exhibit} `slow' concentration at small spatial scales. This highlights the usefulness of sparseness as a tool to quantify blow-up.}

\begin{corollary}[Blow-up criterion]
    \label{cor:blowupcrit}
Suppose that $u$ is a strong Navier--Stokes solution with initial data $u_0 \in L^p_\sigma$. Suppose that the maximal time of existence $T^*(u_0)$ is finite. Let $\gamma_p(t)$ satisfy
\begin{equation}
     \| u(\cdot,t) \|_{L^p} (T^* - t)^{\frac{1}{2} (1-\frac{d}{p})} \gamma_p(t) = c_p
\end{equation}
\da{and $T_1(t)$ be defined by~\eqref{eq:Tdef} with $\gamma_p(t)$ replacing $\gamma$}.
Then, for all $t \in (0,T^*)$, the solution $u(\cdot,t)$ \emph{fails to be} $(\varepsilon,\beta,\ell)$-sparse in $L^p$ with $\ell = \da{ \sqrt{T_1(t)} \bar{\ell}}$ and dimensionless parameters satisfying~\eqref{eq:introreqsonelletc} with $\gamma = \gamma_p(t)$.


\end{corollary}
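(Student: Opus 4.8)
The plan is to obtain Corollary~\ref{cor:blowupcrit} from Theorem~\ref{thm:regcrit} by contraposition, using the time-translation (restart) structure of the Navier--Stokes flow. Fix $t \in (0,T^*)$. Since $u$ is a strong solution on $(0,T^*)$ with $u_0 \in L^2_\sigma \cap L^p_\sigma$, one has $u(\cdot,t) \in L^p_\sigma$ for every $t > 0$, and the shifted field $v(\cdot,s) := u(\cdot,t+s)$ is, by uniqueness of mild solutions, the unique strong solution with initial data $v_0 := u(\cdot,t)$. Its maximal existence time is exactly $T^* - t$: it is at least $T^* - t$ since $v$ solves on $(0,T^* - t)$, and it cannot exceed $T^* - t$, for otherwise re-gluing $v$ onto $u|_{(0,t]}$ would extend $u$ past $T^*$. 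I would isolate this restart observation first, as it is the only structural input beyond Theorem~\ref{thm:regcrit}.

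Next I would match parameters. Set $T_0 := T^* - t$. The scalar $\gamma$ that Theorem~\ref{thm:regcrit} associates to the pair $(v_0,T_0)$ is defined by $\|v_0\|_{L^p} T_0^{\frac{1}{2}(1 - d/p)}\gamma = c_p$, which is precisely the defining relation of $\gamma_p(t)$; hence $\gamma = \gamma_p(t)$. Likewise the sparseness scale in Theorem~\ref{thm:regcrit} is $\ell = \sqrt{T_0}\,\bar\ell = \sqrt{T^* - t}\,\bar\ell$, exactly as in the corollary, and the dimensionless constraints~\eqref{eq:introreqsonelletc} on $(\bar\ell,\varepsilon,\beta)$ are identical, with the same constant $C_0 = C_0(d,p)$. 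Finally, since $T^* - t$ is the maximal existence time of $v$, the guaranteed-existence lower bound forces $T^* - t \geq \bar T(d,p,\|v_0\|_{L^p})$, i.e.\ $\gamma_p(t) \in (0,1]$; in the range $\gamma_p(t) < 1$ this is strict, so $T_0 > \bar T$ and the conclusion of Theorem~\ref{thm:regcrit} is not vacuous. (For the degenerate value $\gamma_p(t) = 1$ the statement may simply be read as vacuous, or such $t$ excluded.)

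The conclusion is then immediate by contradiction. If $u(\cdot,t)$ \emph{were} $(\varepsilon,\beta,\ell)$-sparse in $L^p$ with parameters as in~\eqref{eq:introreqsonelletc} and $\gamma = \gamma_p(t)$, then $v_0$ would satisfy all hypotheses of Theorem~\ref{thm:regcrit} with this $T_0$; therefore the strong solution $v$ extends past time $T_0 = T^* - t$, and re-gluing extends $u$ past $T^*$ --- contradicting the maximality of $T^*$. Hence $u(\cdot,t)$ fails to be so sparse, for every $t \in (0,T^*)$.

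There is essentially no analytic obstacle here: all the work sits in Theorem~\ref{thm:regcrit}. The only points needing care are bookkeeping --- the restart/uniqueness statement together with $u(\cdot,t) \in L^p_\sigma$ for $t > 0$, the verbatim matching of $\gamma_p(t)$ with the $\gamma$ of Theorem~\ref{thm:regcrit} and of the scale $\ell$, and the remark that maximality of $T^*$ yields $\gamma_p(t) \leq 1$, placing us in the non-trivial regime of the theorem.
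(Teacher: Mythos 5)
Your proposal is correct and is exactly the paper's argument: the authors' proof is the one-line observation that the corollary follows from Theorem~\ref{thm:regcrit} applied at time $t$ with initial data $u(\cdot,t)$ and $T_0 = T^* - t$, which is precisely your contrapositive/restart argument. Your additional bookkeeping (uniqueness of the restarted solution, the identification $\gamma = \gamma_p(t)$, and $T_0 \geq \bar T$ so the theorem is non-vacuous) is sound and merely makes explicit what the paper leaves implicit.
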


 \da{ Finally, we include a theorem which demonstrates how sparseness necessarily occurs just prior to a blow-up time.}  The case $p=\infty$ is discussed in~\cite{Grujic2013}.

\begin{theorem}[\emph{A priori} sparseness]
    \label{thm:apriori}
    Suppose that $u$ is a strong Navier--Stokes solution with initial data $u_0 \in L^2_\sigma \cap L^p_\sigma$. Let
    \begin{equation}
    \label{eq:ellnotdef}
    \ell_0(t) = \left( \frac{\| u(\cdot,t) \|_{L^2}}{\| u(\cdot,t) \|_{L^p}} \right)^{\mu_p}, \quad \frac{1}{\mu_p} = d \left( \frac{1}{2} - \frac{1}{p} \right).
\end{equation}
    Then, for all $\varepsilon, \beta \in (0,1)$, there exists $a \geq 1$ such that $u(\cdot,t)$ is $(\varepsilon,\beta,\ell(t))$-sparse in $L^p$ for all $t \in (0,T^*(u_0))$, where $\ell(t) = a \ell_0(t)$. In particular, if the maximal time of existence $T^*(u_0)$ is finite, then
    \begin{equation}
        \label{eq:mustconcentrate}
    \ell_0^{\frac{1}{\mu_p}} \leq c_p^{-1} \| u_0 \|_{L^2} (T^* - t)^{\frac{1}{2}(1-\frac{d}{p})} \to 0 \da{\text{ as } t \to T^*_-}.
    \end{equation}
\end{theorem}

\da{Together, Theorem~\ref{thm:apriori} and Corollary~\ref{cor:blowupcrit} assert that, just prior to a blow-up time, activity must concentrate on small scales,  see~\eqref{eq:mustconcentrate}, but cannot concentrate too quickly.}    \da{
In Section~\ref{sec:frequencysparseness}, we obtain analogues of the above theorems for the frequency sparseness we introduce in Definition~\ref{def:frequencysparseness}. Frequency sparseness has the added benefit that it involves fewer parameters, which simplifies the proofs and makes it compelling for applications.
}

\subsection{Examples concerning `asymptotic criticality'}
\label{subsection:examples}

Our second goal, which is perhaps more important, is to elucidate the so-called `asymptotic criticality' 
introduced in~\cite{grujic2019asymptotic}.  

In~\cite{algebraicreduction}, it was proposed that the `scaling gap' between the energy class, where the \emph{a priori} estimates live, and the critical spaces, where the regularity criteria live, could be reduced by an `algebraic factor' by analyzing the \emph{vorticity} sparseness. The natural extension to sparseness of $\nabla^k u$ was implemented in~\cite{grujic2019asymptotic}.  The authors develop analogues of Theorems~\ref{thm:regcrit} and~\ref{thm:apriori} and interpret them as closing the scaling gap, within the sparseness framework, asymptotically as $k \to +\infty$.

We offer a different interpretation of~\cite{algebraicreduction,grujic2019asymptotic} which suggests that, in a certain reasonable sense, the `scaling gap' is not improved beyond the energy class. This is discussed below and in the examples in Section~\ref{sec:examples}. 

To understand the interpretation in~\cite{algebraicreduction,grujic2019asymptotic}, we codify the sparseness classes introduced therein. The class $Z_\alpha^{(k)}$ is defined by the requirement that $\nabla^k f$ is sparse, with some value of parameters,  with the sparseness scale $\ell$ assumed to satisfy $\ell \approx \| \nabla^k f \|_{L^\infty}^{-\alpha}$ (bounded above and below, up to multiplicative constants). $Z_\alpha^{(k)}$ implicitly depends on these parameters and constants.

To understand better the class $Z^{(k)}_\alpha$, we identify its scaling symmetry. Recall that, if $f$ is sparse at scale $\ell_{\rm old}$, then $\lambda^\beta f(\lambda x)$ is sparse at scale $\ell_{\rm new} = \ell_{\rm old}/\lambda$ for any $\beta \in \R$.  A \da{calculation} shows that $Z^{(k)}_{\alpha}$ is invariant under the scaling symmetry
\begin{equation}
	f_\lambda(x) = \lambda^{\frac{1}{\alpha} - k} f(\lambda x),
\end{equation}
which makes $\ell_{\rm new} \sim \| \nabla^k f_\lambda \|_{L^\infty}^{-\alpha}$. (The choice $\beta = 1/\alpha - k$ is the only choice which does so.) Therefore, $Z_{\alpha}^{(k)}$ has the same homogeneity as $\dot W^{k-\frac{1}{\alpha},\infty}$, and we write \emph{informally}
\begin{equation}
    \label{eq:thehomogeneity}
    Z_{\alpha}^{(k)} \sim W^{k-\frac{1}{\alpha},\infty}.
\end{equation}
This `identification' should not be taken seriously and is easily abused, as we see below. This is, perhaps, because the classes $Z_{\alpha}^{(k)}$ should not be conflated with what we conceive of as function spaces, such as Lebesgue and Sobolev spaces, which are actually Banach spaces.

When $k = 0$ and $d=3$, we have
\begin{equation}
    L^{p,\infty} \subset Z^{(0)}_\alpha \text{ with } \alpha = \frac{p}{3},
\end{equation}
and indeed $L^{p,\infty}$ has the same homogeneity as $Z^{(0)}_\alpha$. The notation $\subset$ is only set inclusion: $Z^{(0)}_\alpha$ is not a normed vector space. The informal relationship $L^{3\alpha,\infty} \sim Z^{(0)}_\alpha$ is used systematically in~\cite{algebraicreduction}. Hence, the discovery in~\cite{algebraicreduction} that
\begin{equation}
\omega \in Z^{(0)}_{\frac{2}{5}} \sim L^{\frac{6}{5},\infty}
\end{equation}
`uniformly' up until a putative finite-time blow-up, was interpreted as a reduction in the scaling gap (recall that $\omega \in L^\infty_t L^1_x$ is at the level of the energy class). In terms of homogeneity in~\eqref{eq:thehomogeneity}, however, we have that
\begin{equation}
    u \in Z^{(1)}_{\frac{2}{5}} \sim W^{-\frac{3}{2},\infty},
\end{equation}
which has the same homogeneity as $L^2$. Similarly, in~\cite{grujic2019asymptotic}, the \emph{a priori} sparseness exponent $\bar{\alpha}_k$ corresponding to $\nabla^k u$ is
\begin{equation}
	\bar{\alpha}_k = \frac{1}{k+\frac{d}{2}}.
\end{equation}
Notice that
\begin{equation}
    Z^{(k)}_{\bar{\alpha}_k} \sim \dot W^{-\frac{d}{2},\infty},
\end{equation}
which has the same homogeneity as $L^2$.

While the above interpretation is suggestive, still, the relationship~\eqref{eq:thehomogeneity} is informal and should not be taken seriously. Rather, to better demonstrate that the scaling gap is not reduced, we analyze the sparseness of $\nabla^k u$ in a class of concrete 
blow-up scenarios in Section~\ref{sec:examples}. \da{These scenarios are motivated by the blow-ups known to occur in related nonlinear PDEs.} Among these scenarios, the $Z^{(k)}_{\bar{\alpha}_k}$ classes corresponding to the \emph{a priori} sparseness in~\cite{grujic2019asymptotic} do not exclude more blow-ups than finite kinetic energy  $u \in L^\infty_t L^2_x$ already does.



\subsection{Existing literature}

\da{
The existing literature concerning Navier-Stokes regularity is massive, and we mention only a few threads connected to~\cite{Grujic2013,BradshawFreqLocalized,grujic2019asymptotic} and the present work. One way to understand this program is as connecting rates of spatial concentration to potential singularity formation. In particular, sparseness is closely related to the concentration phenomena studied
in~\cite{LiOzawaWang} and~\cite{localizedsmoothingBarkerPrange}.\footnote{{See also~\cite{KangMiuraTsai} where the $p\geq 3$ condition in~\cite{localizedsmoothingBarkerPrange} is reduced to $2$ by passing to a Morrey scale.}} The results therein are dedicated to lower bounds on hypothetical singular solutions in balls whose radii are bounded above, and they have applications to quantitative $L^\infty_t L^3_x$ blow-up criteria~\cite{quantitativeregularitybarkerprange}. The criteria~\cite{grujic2021regularity} of Gruji{\'c} and Xu may be considered as in the vein of~\cite{LiOzawaWang} but through the lens of sparseness and analyticity. We also observe that~\cite{LiOzawaWang} contains a regularity criterion in the spirit of Theorem~\ref{thm:regcrit}; they ask that $u_0$ is supported in high frequencies $|\xi| \gg \| u_0 \|_{L^\infty}$. A closely related notion to our frequency sparseness is that of `dissipation wavenumber', explored in~\cite{cheskidovshvydkoyunified} and subsequent works; this is in turn related to the `bubbles of concentration' in Tao's quantitative $L^\infty_t L^3_x$ criterion~\cite{tao2019quantitative}. Furthermore, sparseness and concentration are directly related to intermittency, and in particular, to the analytical approach to intermittency introduced in~\cite{CheskidovShvydkoyanalytical} and the concept of characteristic speeds and active regions therein.

\cmz{The original result of Gruji\'c~\cite{Grujic2013} can be viewed as a geometric regularity criterion. Such results have a rich history within the analysis of fluid equations. See, for example, the pioneering papers~\cite{CoFe,ChCh}, which, respectively, initiated research into regularity based on the alignment of the vorticity and membership of reduced components in critical classes, as well as the references in the survey papers~\cite{BGZ,Miller}.}

We mention two further works in the program of~\cite{grujic2019asymptotic}. In~\cite{grujic2020time}, Gruji{\'c} and Xu extend the tools developed in \cite{grujic2019asymptotic} to the hyper-dissipative Navier--Stokes equations to analyze certain geometric blow-up scenarios. 
Their program has also recently informed the computational study~\cite{rafner2021geometry} on small length scales in turbulence.
}

\section{Preliminaries}



Let $G : \R^d \to \R$ be a Schwartz function and $G_t$ be the convolution operator
\begin{equation}
    G_t u_0 := t^{-\frac{d}{2}} G(\cdot/\sqrt{t}) \ast u_0.
\end{equation}
when $t > 0$. We have in mind that $G = (4\pi)^{-d/2} e^{-|x|^2/4}$ and $G_t$ is the heat semigroup, though it will be convenient to allow $G$ to be general.

\begin{lemma}\label{lemma.heat}
 Let $p \in (1,\infty]$, $\gamma \in (0,1)$, and $t>0$ \da{be fixed}. Let $u_0\in L^p(\R^d)$ be a vector field. Suppose that $u_0$ is $(\varepsilon,\be,\bar \ell \sqrt {t})$-sparse, where the dimensionless parameters $\varepsilon, \be \in (0,1)$ and $\bar\ell > 0$ satisfy
 \begin{equation}
    \label{eq:therequirementsforsparsitytouselater}
    \bar \ell \geq f(\gamma);\quad  \da{\beta} \leq \| G \|_{L^1}^{-1} \gamma / 3;\quad  \da{\varepsilon^{1-\frac{1}{p}}} \leq C_0^{-1} \| G \|_{L^\infty}^{-1} \gamma / \bar{\ell}^d
\end{equation}
where $f$ depends on $G$ and satisfies $f(\gamma) \to +\infty$ as $\gamma \to 0^+$, and
$C_0 > 1$ is an absolute constant depending only on the dimension.  Then \begin{equation}\label{ineq.caloric.decay}
\|  G_t u_0\|_{L^p} \leq \gamma \|u_0\|_{L^p}.    
\end{equation}
When $G_t = e^{t \Delta}$, the above requirement on $\bar{\ell}$ can be made more explicit:
 \begin{equation}
    \label{eq:therequirementsforlesparsity}
    \bar \ell^2 \geq C_0 {\ln(C_0/\gamma)}.
\end{equation}
\end{lemma}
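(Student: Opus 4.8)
The plan is to split the data across the sparse set supplied by Definition~\ref{def:lpsparseness} and then to split the convolution kernel $K_t(x):=t^{-d/2}G(x/\sqrt t)$ at the sparseness radius. Write $\ell:=\bar\ell\sqrt t$, and note $\|K_t\|_{L^1}=\|G\|_{L^1}$ and $\|K_t\|_{L^\infty}=t^{-d/2}\|G\|_{L^\infty}$. Let $S$ be the measurable set with $\|u_0\|_{L^p(S^c)}<\beta\|u_0\|_{L^p}$ and $|S\cap B_\ell(x_0)|\le\varepsilon|B_\ell|$ for all $x_0$, and write $u_0=u_0\mathbf 1_S+u_0\mathbf 1_{S^c}$, so $G_tu_0=G_t(u_0\mathbf 1_S)+G_t(u_0\mathbf 1_{S^c})$. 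It suffices to bound $G_t(u_0\mathbf 1_{S^c})$ by $\tfrac{\gamma}{2}\|u_0\|_{L^p}$ and $G_t(u_0\mathbf 1_S)$ by $\tfrac{\gamma}{2}\|u_0\|_{L^p}$.

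The remainder is handled crudely. By Young's convolution inequality and the defining property of $S$,
\[
\|G_t(u_0\mathbf 1_{S^c})\|_{L^p}\le\|G\|_{L^1}\,\|u_0\mathbf 1_{S^c}\|_{L^p}<\|G\|_{L^1}\,\beta\,\|u_0\|_{L^p}.
\]
Overestimating $\|G\|_{L^1}\le\|G\|_{L^\infty}|B_{\bar\ell}|+\int_{|w|>\bar\ell}|G|\,dw\lesssim_d\|G\|_{L^\infty}\bar\ell^d$ (valid once $\bar\ell\ge1$) and using $\beta<\beta^{1-\frac1p}$, the smallness required of $\beta$ forces this below $\tfrac{\gamma}{2}\|u_0\|_{L^p}$; this is the origin of a hypothesis of the shape in~\eqref{eq:therequirementsforsparsitytouselater}.

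For the main term, decompose the kernel: $G_t(u_0\mathbf 1_S)=(K_t\mathbf 1_{B_\ell})\ast(u_0\mathbf 1_S)+(K_t\mathbf 1_{\R^d\setminus B_\ell})\ast(u_0\mathbf 1_S)$. For the far piece, Young's inequality and a change of variables give
\[
\big\|(K_t\mathbf 1_{\R^d\setminus B_\ell})\ast(u_0\mathbf 1_S)\big\|_{L^p}\le\Big(\int_{|w|>\bar\ell}|G(w)|\,dw\Big)\|u_0\|_{L^p}=:\delta(\bar\ell)\,\|u_0\|_{L^p},
\]
and since $G$ is Schwartz, $\delta(\bar\ell)\to0$ as $\bar\ell\to+\infty$; choosing $f$ with $f(\gamma)\to+\infty$ so that $\delta(f(\gamma))\le\tfrac\gamma4$ makes this term harmless, and for the Gaussian $\delta(\bar\ell)\lesssim_d e^{-\bar\ell^2/8}$, so $\delta(\bar\ell)\le\tfrac\gamma4$ holds once $\bar\ell^2\ge C_0\ln(C_0/\gamma)$, which is~\eqref{eq:therequirementsforlesparsity}. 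For the near piece, estimate pointwise with $|K_t|\le t^{-d/2}\|G\|_{L^\infty}$ and Hölder in $y$ (where $p>1$ is used):
\[
\Big|\int_{B_\ell(x)\cap S}K_t(x-y)\,u_0(y)\,dy\Big|\le t^{-d/2}\|G\|_{L^\infty}\,|S\cap B_\ell(x)|^{1-\frac1p}\,\|u_0\|_{L^p(B_\ell(x))}.
\]
Inserting $|S\cap B_\ell(x)|\le\varepsilon|B_\ell|$, taking the $L^p$ norm in $x$, and using $\int_{\R^d}\|u_0\|_{L^p(B_\ell(x))}^p\,dx=|B_\ell|\,\|u_0\|_{L^p}^p$, the powers of $t$ cancel and one is left with $\lesssim_d\|G\|_{L^\infty}\,\varepsilon^{1-\frac1p}\,\bar\ell^{d}\,\|u_0\|_{L^p}$, which the smallness required of the density makes $\le\tfrac\gamma4\|u_0\|_{L^p}$. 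Adding the contributions gives $\|G_tu_0\|_{L^p}\le\gamma\|u_0\|_{L^p}$, and collecting constants produces requirements of the type~\eqref{eq:therequirementsforsparsitytouselater}, respectively~\eqref{eq:therequirementsforlesparsity} in the Gaussian case.

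The main obstacle is the near piece: one must genuinely use that, although $u_0\mathbf 1_S$ carries almost all the $L^p$-mass of $u_0$, it is supported on a set that is only $\varepsilon$-thin at the scale $\ell=\bar\ell\sqrt t$, which is \emph{much coarser} than the intrinsic width $\sqrt t$ of the kernel; this is why the density enters raised to the power $1-\tfrac1p$ (no gain as $p\downarrow1$, clean at $p=\infty$). For $p<\infty$ one cannot simply bound $G_t(u_0\mathbf 1_S)$ pointwise and conclude, since that function is spread over all of $\R^d$; one must keep the kernel inside the integral and track the $L^p$ norm by Minkowski/Hölder as above. The only other delicate point is rendering the kernel tail beyond radius $\ell$ negligible — this forces $\bar\ell$ to be large, and computing the Gaussian tail is precisely what yields the logarithmic lower bound $\bar\ell^2\gtrsim\ln(1/\gamma)$.
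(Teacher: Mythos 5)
Your argument is essentially the paper's: the same splitting into the kernel tail beyond radius $\bar\ell\sqrt t$, the near part of the kernel acting on $u_0\mathbf{1}_S$, and the complementary piece $u_0\mathbf{1}_{S^c}$, with Young's inequality for the first and last contributions, the $L^\infty$ kernel bound plus H\"older and the density of $S$ for the middle one, and the Gaussian tail producing $\bar\ell^2\gtrsim\ln(1/\gamma)$; the cancellation of the powers of $t$ you track is handled in the paper by rescaling to $t=1$ at the outset.

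Two small remarks. The pairing you derive --- density of $S$ raised to the power $1-\tfrac1p$ controlled by $\gamma/(\|G\|_{L^\infty}\bar\ell^{\,d})$, and mass fraction on $S^c$ controlled by $\gamma/\|G\|_{L^1}$ --- is the natural one and is exactly what the paper's proof uses; the apparent mismatch with~\eqref{eq:therequirementsforsparsitytouselater} originates in the paper itself, which swaps the roles of $\varepsilon$ and $\beta$ between Definition~\ref{def:lpsparseness} and the statement and proof of the lemma. As a consequence, your attempt to force the $S^c$ term under the stated $\beta^{1-\frac1p}$ hypothesis via $\|G\|_{L^1}\lesssim_d\|G\|_{L^\infty}\bar\ell^{\,d}$ is both unnecessary and not literally valid with a dimension-only constant (a Schwartz function may carry most of its $L^1$ mass in its tail, so this requires either a constant depending on $G$ or splitting the kernel in that term as well); the clean route is simply to impose that the mass fraction be $\le\|G\|_{L^1}^{-1}\gamma/3$ and the density to the power $1-\tfrac1p$ be $\le C_0^{-1}\|G\|_{L^\infty}^{-1}\gamma/\bar\ell^{\,d}$, which is precisely the content of your two estimates and of the lemma's hypotheses once the labels are read consistently.
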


\da{Notice that the length scale of sparseness $\bar{\ell} \sqrt{t}$ in Lemma~\ref{lemma.heat} depends on $t$.}

The case $p=1$ fails since, for non-negative initial data, the heat equation preserves the $L^1$ norm, regardless of sparsity. 

\begin{proof} We write only the proof for $p \in (1,\infty)$. The endpoint case $p=\I$ is identical except that the outer norm in~\eqref{ineq.B} is replaced by a $\sup$.  

 \da{
Upon rescaling, we need only consider $t = 1$, a fact we now justify. Let $u_0(x)=\td u_0(\sqrt t x)$. Then a simple computation shows
\[
(G_t * \td u_0) (x)=  (G_1 * u_0) (x/\sqrt t).
\]
Using this, we have 
\[
\| G_t * \td u_0 \|_{L^p} =  \sqrt t^{\frac d p} \| G_1 *u_0\|_{L^p}.
\]
By the definition of sparseness, if $\td u_0$ is $(\varepsilon,\be,\bar\ell \sqrt t)$-sparse, 
then $u_0$ is $(\varepsilon,\be,\bar\ell  )$-sparse. 
Hence, assuming the result for $t=1$, we have for a given vector field $\td u_0$ that
\[
\| G_t * \td u_0 \|_{L^p} = \sqrt t^{\frac d p} \| G_1 \ast u_0\|_{L^p} \leq \sqrt t^{\frac d p} \ga \| u_0\|_{L^p} = \ga \| \td u_0 \|_{L^p} \, .
\]

}

 \da{
Assuming $t=1$,} we expand the operator $G_1$ into three parts:
\[
G_1 u_0= {\rm I}^{\rm far}+{\rm II}^{\rm near}_{S}+{\rm II}^{\rm near}_{S^c},
\]
where
\begin{equation}
\begin{aligned}
    {\rm I}^{\rm far}(x)&=\int  G(x-y) \chi_{B_{\bar \ell}^c}(x-y) u_0 (y)  \,dy
\\{\rm II}^{\rm near}_{S}(x)&= \int  G(x-y)  \chi_{B_{\bar \ell}}(x-y) u_0 (y) \chi_{S}(y)\,dy
\\{\rm II}^{\rm near}_{S^c}(x)&=\int  G(x-y)  \chi_{B_{\bar \ell}}(x-y) u_0 (y) \chi_{S^c}(y)\,dy. 
\end{aligned}
\end{equation}
We will first fix $\bar \ell$ and then $\varepsilon$ and $\be$.

By Young's convolution inequality, the first term, ${\rm I}^{\rm far}$, satisfies
\begin{equation}
    \|{\rm I}^{\rm far}\|_{L^p} \leq \| u_0\|_{L^p} \| G \|_{L^1(B_{\bar \ell}^c)} 
\les_{G,k} \la \bar{\ell} \ra^{-k} \| u_0\|_{L^p}
\end{equation}
for all $k \geq 0$, since $G$ is a Schwartz function. By choosing $\bar \ell$ sufficiently large,  we can therefore ensure
\[
\| {\rm I}^{\rm far}\|_{L^p} \leq \frac \gamma 3 \|u_0\|_{L^p}.
\]
When $G$ corresponds to the heat kernel, we have $\| G \|_{L^1(B_{\bar \ell}^c)} \leq C e^{-\bar{\ell}^2/8}$, which gives us the requirement on $\bar{\ell}$ in~\eqref{eq:therequirementsforlesparsity}.

The second term, ${\rm II}^{\rm near}_{S}$, is the most interesting:
\begin{equation}
\label{ineq.B}
    \begin{aligned}
    \| {\rm II}^{\rm near}_{S} \|_{L^p}&=  \bigg(   \int   \bigg|\int G(x-y) u_0(y) \chi_{B_{{\bar \ell}}(x)}(y)\chi_S(y)\,dy \,\bigg|^p \,dx           \bigg)^{\frac 1 p}
\\&\leq \| G \|_{L^\infty} \bigg(    \int  \bigg( \| u_0 \|_{L^p(B_{\bar \ell}(x))} \, |S\cap B_{{\bar \ell}} (x)|^{\frac{1}{p'}} \bigg)^p \,dx           \bigg)^{\frac 1 p}
\\&\lesssim \| G \|_{L^\infty} \sup_{x\in \R^d }|S\cap B_{{\bar \ell}}(x)|^{\frac{1}{p'}}   ({\bar \ell})^{\frac{d}{p}} \|u_0\|_{L^p},
    \end{aligned}
\end{equation}
where the suppressed constant depends on the volume of the unit ball and $|\cdot|$ refers to Lebesgue measure.
By the sparseness assumption, this becomes 
\EQ{
\|{\rm II}^{\rm near}_{S}\|_{L^p}& \les \| G \|_{L^\infty}  \da{\varepsilon^{\frac{1}{p'}}} {\bar \ell}^d      \|u_0\|_{L^p}.
}
We choose  \da{$\varepsilon$} small to ensure that 
\[
\| {\rm II}^{\rm near}_{S}\|_{L^p} \leq \frac \gamma 3  \|u_0\|_{L^p}.
\]
Finally, for ${\rm II}^{\rm near}_{S^c}$, by Young's convolution inequality, we have \begin{equation}
\|{\rm II}^{\rm near}_{S^c}\|_{L^p}\leq \| u_0 \chi_{S^c}\|_{L^p} \| G \|_{L^1} \leq  \da{\be}  \| G \|_{L^1} \|u_0\|_{L^p}.
\end{equation}
By taking $ \da{\be} \leq \| G \|_{L^1}^{-1} \gamma/3$, we are done. \end{proof}


$L^p$-sparseness is formulated in physical space, but a similar conclusion in Fourier space follows if the initial data is supported on sufficiently high Littlewood-Paley frequencies. See~\cite[Chapter 2]{BahouriCheminDanchin} for a review of Littlewood-Paley theory. Let $\Delta_{\geq J} = \sum_{j\geq J}\dot \Delta_j$ and $  \Delta_{<J}=\sum_{j<J} \dot \Delta_j  $. Here, $2^J$ is a frequency, and $2^{-J}$ is a length scale. It is not essential that $J$ is an integer.

\begin{definition}[$L^p$-sparseness in frequency]
\label{def:frequencysparseness}
Let $\be \in (0,1)$ and $J\in \R$. Then a vector field $u_0 \in L^p$ is \emph{$(\be,J)$-sparse in frequency in $L^p$} if 
\begin{equation}
\| \Delta_{<J}u_0\|_{L^p}\leq \be \|u_0\|_{L^p}.    
\end{equation}
\end{definition}

\da{This notion of sparseness actually encompasses the spatial version, at least within a certain parameter range, as we demonstrate in Lemma~\ref{lemma:sparial.vs.frequency}, while preserving the caloric decay property \eqref{ineq.caloric.decay}, see Lemma \ref{lemma.heat.frequency}. }


\begin{lemma}[Spatial vs. frequency sparseness]\label{lemma:sparial.vs.frequency} Let $1 \leq p \leq \infty$, $\gamma \in (0,1)$, $J \in \R$, and $\sqrt{t} = 2^{-J}$. Let $u_0 \in L^p(\R^d)$ be a vector field. Suppose that $u_0$ is $(\varepsilon,\beta,\bar{\ell} \sqrt{t} )$-sparse with dimensionless parameters  $\varepsilon, \be \in (0,1)$ and $\bar\ell > 0$ satisfying~\eqref{eq:therequirementsforsparsitytouselater}. Then $u_0$ is $(\gamma,J)$-sparse in frequency.
\end{lemma}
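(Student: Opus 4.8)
The statement asserts that spatial $(\varepsilon,\beta,\bar\ell\sqrt t)$-sparseness, under the quantitative conditions~\eqref{eq:therequirementsforsparsitytouselater}, implies frequency sparseness $\|\Delta_{<J}u_0\|_{L^p}\le\gamma\|u_0\|_{L^p}$ with $2^{-J}=\sqrt t$. The key observation is that the low-frequency projection $\Delta_{<J}$ is itself a convolution operator of exactly the type covered by Lemma~\ref{lemma.heat}: writing $\Delta_{<J}u_0 = \check\phi_J \ast u_0$ where $\phi_J(\xi)=\chi(2^{-J}\xi)$ for a suitable smooth cutoff $\chi$ supported in a ball, we have $\check\phi_J(x) = 2^{Jd}\check\phi(2^J x) = t^{-d/2}G(\cdot/\sqrt t)\ast u_0$ with $G = \check\phi$ a Schwartz function (since $\phi$ is smooth and compactly supported). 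So the plan is simply to \emph{apply Lemma~\ref{lemma.heat} with this choice of $G$}.

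Concretely, first I fix the Littlewood--Paley convention: let $\chi$ be the standard radial bump with $\chi\equiv 1$ near the origin and $\supp\chi\subset B_1$ (or $B_{4/3}$, matching~\cite{BahouriCheminDanchin}), so that $\Delta_{<J}$ has Fourier multiplier $\chi(2^{-J}\xi)$ and convolution kernel $G_t$ with $G=\check\chi\in\mathcal S(\R^d)$. The only subtlety is that Lemma~\ref{lemma.heat} was stated for vector fields $u_0$ but its proof is entrely scalar/componentwise and the convolution kernel $G$ there is scalar-valued acting on each component; since $\Delta_{<J}$ acts componentwise as the same scalar convolution, this causes no issue. The function $f(\gamma)$ in~\eqref{eq:therequirementsforsparsitytouselater} is the one produced by Lemma~\ref{lemma.heat} for \emph{this} $G$ — i.e.\ it absorbs the constants $\|G\|_{L^1}$ and $\|G\|_{L^\infty}$ associated to $\check\chi$ — so the hypotheses of Lemma~\ref{lemma:sparial.vs.frequency} are literally the hypotheses needed to invoke Lemma~\ref{lemma.heat}. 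Applying that lemma gives $\|G_t u_0\|_{L^p} = \|\Delta_{<J}u_0\|_{L^p}\le\gamma\|u_0\|_{L^p}$, which is exactly $(\gamma,J)$-sparseness in frequency.

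There is essentially no obstacle here beyond bookkeeping: the content is the identification $\Delta_{<J} = G_t$ (with $\sqrt t = 2^{-J}$) for a Schwartz $G$, after which Lemma~\ref{lemma.heat} applies verbatim. The one point worth stating carefully in the write-up is that the constant $C_0$ and the rate function $f$ appearing in~\eqref{eq:therequirementsforsparsitytouselater} are understood to be those associated with the kernel $G=\check\chi$ of the chosen Littlewood--Paley decomposition (rather than the heat kernel), so that the more explicit bound~\eqref{eq:therequirementsforlesparsity} is \emph{not} what is used — only the general, Schwartz-kernel version. I would also remark that the range $1\le p\le\infty$ (including both endpoints) is fine: Lemma~\ref{lemma.heat} excludes $p=1$ for its own conclusion, but here $\gamma\in(0,1)$ and we are only invoking Young's inequality and the $L^\infty$-bound on $G$, both of which are valid at $p=1$ and $p=\infty$; if one prefers to be safe, one can note that for $p=1$ the argument in the proof of Lemma~\ref{lemma.heat} still goes through since no $L^1$-preservation obstruction arises for the projection kernel (which is not nonnegative), or simply restrict to $1<p\le\infty$ to match Lemma~\ref{lemma.heat} and treat $p=1$ by the same three-term splitting directly.
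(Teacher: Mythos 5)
Your proposal is correct and is essentially identical to the paper's own proof, which likewise takes $G$ to be the Schwartz kernel of the low-frequency projection and invokes Lemma~\ref{lemma.heat} directly (the paper states this in one line; you have merely spelled out the rescaling $\Delta_{<J}=G_t$ with $\sqrt{t}=2^{-J}$ and the componentwise action). The only inessential quibble is your aside on $p=1$: the Hölder step $|S\cap B_{\bar\ell}|^{1/p'}$ in the proof of Lemma~\ref{lemma.heat} degenerates there, but condition~\eqref{eq:therequirementsforsparsitytouselater} is then vacuous anyway, so the statement holds trivially and no separate argument is needed.
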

\begin{proof}
Let $G$ be the Schwartz function associated to the convolution operator $\Delta_{< 1}$. Then the proof is a direct application of Lemma~\ref{lemma.heat}.
\end{proof}

\begin{lemma}\label{lemma.heat.frequency}Fix $1\leq p\leq \I$, $t>0$ and $\ga>0$. Let $u_0\in L^p$. There exists $J\in \Z$ satisfying $2^{J}\sim \ga^{-1} t^{-1/2}$ and $\be=\ga/2$ so that, if $u_0$ is $(\be,J)$-sparse in frequency, then  
\begin{equation}
\| e^{t\Delta} u_0 \|_{L^{p}} \leq \ga \|u_0\|_{L^p}.    
\end{equation}
\end{lemma}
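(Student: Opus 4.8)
The plan is to split the evolution $e^{t\Delta}u_0$ into its low- and high-frequency parts relative to the cutoff $J$, and to estimate each piece separately. Write
\[
e^{t\Delta}u_0 = e^{t\Delta}\Delta_{<J}u_0 + e^{t\Delta}\Delta_{\geq J}u_0.
\]
For the low-frequency piece, the heat semigroup is bounded on $L^p$ uniformly in $t$ (Young's inequality, since $\|G_t\|_{L^1}=\|G\|_{L^1}=1$ for the Gaussian), so $\|e^{t\Delta}\Delta_{<J}u_0\|_{L^p}\leq \|\Delta_{<J}u_0\|_{L^p}\leq \be\|u_0\|_{L^p} = (\ga/2)\|u_0\|_{L^p}$ directly from the $(\be,J)$-sparseness hypothesis. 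For the high-frequency piece, I would use the standard Littlewood--Paley heat-smoothing (Bernstein-type) estimate: on frequencies $\gtrsim 2^{J}$ the multiplier $e^{-t|\xi|^2}$ is bounded by $e^{-c t 2^{2j}}$ on the $j$-th annulus, so summing a geometric series gives
\[
\|e^{t\Delta}\Delta_{\geq J}u_0\|_{L^p} \lesssim e^{-c t 2^{2J}} \|u_0\|_{L^p}.
\]
Then I choose $J$ so that $t 2^{2J}$ is large enough that the implied constant times $e^{-ct2^{2J}}$ is $\leq \ga/2$; since the constant is absolute this means $t2^{2J} \gtrsim \ln(C/\ga)$, i.e. $2^{J}\sim t^{-1/2}\sqrt{\ln(C/\ga)}$. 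Adding the two bounds gives $\|e^{t\Delta}u_0\|_{L^p}\leq \ga\|u_0\|_{L^p}$.

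There is a slight mismatch to reconcile: the statement asserts $2^{J}\sim \ga^{-1}t^{-1/2}$, which is a \emph{larger} cutoff than the $t^{-1/2}\sqrt{\ln(C/\ga)}$ that the sharp geometric-series argument requires, so it is certainly sufficient — one simply takes $J$ to be (the integer part of) $\log_2(c\,\ga^{-1}t^{-1/2})$ and notes $\ga^{-1}\geq \sqrt{\ln(C/\ga)}$ for $\ga$ small; for $\ga$ bounded away from $0$ the claim $\ga<1$ together with enlarging the absolute constant absorbs everything. Alternatively, and perhaps cleaner, one can invoke Lemma~\ref{lemma.heat.frequency}'s spatial sibling: by Lemma~\ref{lemma:sparial.vs.frequency} frequency sparseness is implied by spatial sparseness, but here we want the reverse packaging, so I would instead just mimic the three-term decomposition in the proof of Lemma~\ref{lemma.heat} with $G$ the Schwartz kernel of $e^{t\Delta}\Delta_{\geq 1}$ (rescaled), using that this kernel has rapidly decaying tails and small $L^1$ norm on the relevant region — but the direct Littlewood--Paley computation above is shorter.

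The main obstacle is purely bookkeeping: matching the quantitative form $2^{J}\sim \ga^{-1}t^{-1/2}$ and $\be=\ga/2$ in the statement against whatever the estimate naturally produces, and making sure the constant $c$ in $e^{-ct2^{2J}}$ (which comes from the geometric ratio in the Littlewood--Paley sum and the lower bound $|\xi|\gtrsim 2^{J}$ on the support) is tracked correctly so that the final constant is genuinely absolute and independent of $p$. The $L^p$-boundedness of $e^{t\Delta}$ and of each $\dot\Delta_j$ (with bounds uniform in $j$ and $t$) are the only external facts used, and both are standard; see~\cite[Chapter 2]{BahouriCheminDanchin}. No endpoint issue arises at $p=1$ or $p=\infty$ here because, unlike in Lemma~\ref{lemma.heat}, the high-frequency projection already kills the obstruction that made $p=1$ fail.
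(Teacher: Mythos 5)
Your proposal is correct and follows essentially the same route as the paper: split $e^{t\Delta}u_0$ into $\Delta_{<J}$ and $\Delta_{\geq J}$ pieces, control the low frequencies by the sparseness hypothesis together with the $L^p$-boundedness of the heat semigroup, and control the high frequencies via the Littlewood--Paley heat-decay estimate $\|e^{t\Delta}\dot\Delta_j u_0\|_{L^p}\lesssim e^{-ct2^{2j}}\|u_0\|_{L^p}$ from \cite[Lemma 2.4]{BahouriCheminDanchin}. The only cosmetic difference is that the paper bounds $e^{-ct2^{2j}}$ by $(ct2^{2j})^{-1}$ so that the resulting geometric series $\sum_{j\geq J}(t2^{2j})^{-1}\lesssim (t2^{2J})^{-1}$ yields the stated choice $2^J\sim\gamma^{-1}t^{-1/2}$ directly, whereas you keep the exponential and then observe that this choice of $J$ is more than sufficient.
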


\begin{proof} 
 By \cite[Lemma 2.4]{BahouriCheminDanchin}, for any $1\leq p\leq \infty$,
 \begin{equation}
\| e^{t\Delta} \dot \Delta_{j} u_0\|_{L^p}\leq Ce^{-c t 2^{2j}} \|u_0\|_{L^p}.     
 \end{equation}
Hence,
\begin{equation}
    \| e^{t\Delta}  \Delta_{\geq J} u_0\|_{L^p} 
\lesssim  \sum_{j\geq J} e^{-c t 2^{2j}} \|u_0\|_{L^p}
 \lesssim \bigg(\sum_{j\geq J} \frac 1 {t2^{2j}} \bigg) \|u_0\|_{L^p}\lesssim \frac 1 {t 2^{2J}} \|u_0\|_{L^p} < \frac\ga2 \|u_0\|_{L^p},
\end{equation}
provided $2^{J}\sim \ga^{-1} t^{-1/2}$.

On the other hand, by assumption
\begin{equation}
    \| e^{t\Delta }\Delta_{< J } u_0  \|_{L^p} \leq  \| \Delta_{< J } u_0 \|_{L^p} < \be \|u_0\|_{L^p},
\end{equation}
and we are done provided $\be = \ga/2$. \end{proof} 

\begin{remark}
One could also give a frequency sparseness definition in homogeneous Besov spaces. This is essentially what is done in~\cite{BradshawFreqLocalized}.
\end{remark}

\section{Main results}

\subsection{Sparseness in physical space}

In this section, we prove Theorem~\ref{thm:regcrit}, Corollary~\ref{cor:blowupcrit}, and Theorem~\ref{thm:apriori}. First, we adapt Lemma~\ref{lemma.heat} to the nonlinear setting:

\begin{proposition}\label{prop.decreasing}
Let $d < p \leq \infty$ and $u_0 \in L^p_\sigma(\R^d)$. For any $\gamma \in (0,1)$, \da{define $T_1 \in (0,\bar T)$ according to
\eqref{eq:Tdef}. Then $T_1$ satisfies} the following properties: First, the strong $L^p$ solution $u$ of the Navier-Stokes equations with initial data $u_0$ exists on $\R^d \times (0,\da{T_1}]$. Moreover, for all $t\in (0,\da{T_1}]$, if $u_0$ is $(\varepsilon,\be,\ell)$-sparse, where $\ell = \sqrt{t} \bar{\ell}$ and
\begin{equation}
   \da{\be}  = \frac \gamma 6;\quad \da{\varepsilon}^{1-\frac{1}{p}} \leq (2C_0)^{-1} \gamma / \bar{\ell}^d; \quad , \bar \ell^2 \geq C_0 {\ln(2 C_0/\gamma)},
\end{equation}
then
\begin{equation}
    \| u(\cdot,t)\|_{L^p} \leq \gamma \|u_0\|_{L^p}. 
\end{equation}
\end{proposition}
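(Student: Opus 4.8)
The plan is to start from the mild (Duhamel) formulation
\[
u(\cdot,t) = e^{t\Delta}u_0 - \int_0^t e^{(t-s)\Delta}\,\bP\,\div\big(u\otimes u\big)(\cdot,s)\,ds =: e^{t\Delta}u_0 + B(u,u)(\cdot,t),
\]
and to arrange that each of the two terms contributes at most $\tfrac{\gamma}{2}\|u_0\|_{L^p}$ to the $L^p$ norm at time $t$. That $u$ exists on $\R^d\times(0,\bar T]$ with $\sup_{s\in(0,\bar T)}\|u(\cdot,s)\|_{L^p}\le 2\|u_0\|_{L^p}$ is already recorded before Theorem~\ref{thm:regcrit}; once $T<\bar T$ is fixed below, this gives both the asserted existence on $(0,T]$ and the bound $\|u(\cdot,s)\|_{L^p}\le 2\|u_0\|_{L^p}$ on that interval, which is all we will need about $u$ itself.

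For the linear term I would invoke Lemma~\ref{lemma.heat} with $G=(4\pi)^{-d/2}e^{-|\cdot|^2/4}$ the Gaussian (so $G_t=e^{t\Delta}$ and $\|G\|_{L^1}=1$) and with $\gamma/2$ in the role of the $\gamma$ there. Specializing~\eqref{eq:therequirementsforsparsitytouselater}--\eqref{eq:therequirementsforlesparsity} to the heat semigroup, the hypotheses become $\varepsilon\le(\gamma/2)/3=\gamma/6$, then $\beta^{1-1/p}\le(2C_0)^{-1}\gamma/\bar\ell^{\,d}$ (after absorbing $\|G\|_{L^\infty}$ and the volume of the unit ball into $C_0$), and finally $\bar\ell^{\,2}\ge C_0\ln(2C_0/\gamma)$ --- precisely the conditions imposed on $\varepsilon,\beta,\bar\ell$ in the statement. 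Hence, whenever $u_0$ is $(\varepsilon,\beta,\sqrt t\,\bar\ell)$-sparse, Lemma~\ref{lemma.heat} yields $\|e^{t\Delta}u_0\|_{L^p}\le\tfrac{\gamma}{2}\|u_0\|_{L^p}$ for every $t>0$.

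For the bilinear term I would use the classical subcritical heat estimate $\|e^{\tau\Delta}\bP\,\div F\|_{L^p}\lesssim_{p,d}\tau^{-\frac{1}{2}-\frac{d}{2p}}\|F\|_{L^{p/2}}$ (for $p=\infty$ one argues instead via the bound $\lesssim\tau^{-1/2}$ on the $L^1_x$-norm of the Oseen-type kernel of $e^{\tau\Delta}\bP\,\div$), together with $\|u\otimes u\|_{L^{p/2}}\le\|u\|_{L^p}^2$ and the a priori bound above, to get
\[
\|B(u,u)(\cdot,t)\|_{L^p}\ \lesssim_{p,d}\ \int_0^t (t-s)^{-\frac{1}{2}-\frac{d}{2p}}\,\|u(\cdot,s)\|_{L^p}^2\,ds\ \le\ C_p\,\|u_0\|_{L^p}^2\,t^{\frac{1}{2}(1-\frac{d}{p})},
\]
the time integral being finite exactly because $p>d$. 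I then define $T$ by $\|u_0\|_{L^p}\,T^{\frac{1}{2}(1-\frac{d}{p})}=\gamma/(2C_p)$, which one may take as~\eqref{eq:Tdef}; for $t\in(0,T]$ this forces $\|B(u,u)(\cdot,t)\|_{L^p}\le\tfrac{\gamma}{2}\|u_0\|_{L^p}$, and adding the two contributions yields $\|u(\cdot,t)\|_{L^p}\le\gamma\|u_0\|_{L^p}$.

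The one place requiring care is the bookkeeping of constants. First, one must check $T<\bar T$, so that the a priori bound is legitimately available on $(0,T]$; since $T$ and $\bar T$ are defined by $\|u_0\|_{L^p}T^{\frac{1}{2}(1-\frac{d}{p})}=\gamma/(2C_p)$ and $\|u_0\|_{L^p}\bar T^{\frac{1}{2}(1-\frac{d}{p})}=c_p$ respectively, this amounts to $\gamma/(2C_p)<c_p$, which holds once $c_p$ and $C_p$ are fixed compatibly (and is automatic for the small $\gamma$ relevant to the blow-up application). Second, one must confirm that absorbing $\|G\|_{L^\infty}$ and the unit-ball volume constant into the single dimensional constant $C_0$ is consistent with the $C_0$ occurring in~\eqref{eq:therequirementsforlesparsity}. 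Neither is a genuine obstacle: all the analytic substance sits in Lemma~\ref{lemma.heat}, already proved, and in the textbook subcritical bilinear estimate, so I do not anticipate any real difficulty.
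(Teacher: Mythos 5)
Your proposal is correct and follows essentially the same route as the paper: the same Duhamel splitting, the same application of Lemma~\ref{lemma.heat} with $\gamma/2$ (with the identical bookkeeping of $\|G\|_{L^1}$, $\|G\|_{L^\infty}$, and the unit-ball constant into $C_0$), and the same choice of $T$ via the subcritical bilinear estimate, with $T<\bar T$ arranged by adjusting the constant. You merely spell out the proof of the bilinear bound~\eqref{ineq.Duhamel}, which the paper states directly.
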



\begin{proof}

 \da{ From \eqref{ineq.Duhamel} and the definition of $T_1$,
for all $t \in (0,T_1]$, we have }
\begin{equation}
    \label{eq:mildsolformulation1}
\bigg\|  \int_0^t e^{(t-s)\Delta}\mathbb P \nb \cdot (u\otimes u)\,ds\bigg\|_{L^p}
\leq  \frac \gamma 2 \|u_0\|_{L^p}.
\end{equation}
Next,  we apply Lemma~\ref{lemma.heat} (with $\gamma/2$ instead of $\gamma$ therein) to obtain
\begin{equation}
\label{eq:mildsolformulation2}
\| e^{t\Delta}u_0\|_{L^p}\leq \frac \gamma 2 \|u_0\|_{L^p}
\end{equation}
under the requirements on $(\varepsilon,\beta,\ell)$ in Lemma~\ref{lemma.heat}.
Finally, we combine~\eqref{eq:mildsolformulation1}, ~\eqref{eq:mildsolformulation2}, and Duhamel's formula for $u$ to complete the proof.
\end{proof}

\da{
\begin{example}
Focusing on the case when $p=\I$ for simplicity, we include an example to demonstrate that the conditions stipulated  in the theorem are not overdetermined nor do they result in a smallness condition in $L^\I$.    Fix $\ga = 1/2$. We will show that for any $\be, \varepsilon\in (0,1)$ and $\ell>0$ there exists a divergence-free vector field $u_0$ such that $\|u_0\|_{L^\I}=1$ and $u_0$ is $(\e,\be,\ell)$-sparse.  Let $\phi :\R\to[0,1]$ satisfy $\phi = 1$ on $[-r,r]$ and $\phi = 0$ on $[-2r,2r]^c$ where $r>0$ will be chosen momentarily. Let $u_0$ be the vector field $(0,\phi(x_1)\phi(x_3),0)$. As a shear flow, this vector field is plainly divergence free. Furthermore, for any $r$, $\|u_0\|_{L^\I}=1$. (In any case, sparseness is preserved under multiplication by a non-zero prefactor.) By choosing $r$ we will ensure the appropriate sparseness condition is satisfied by setting $S = \{x\in \R^3: |x_1|<2r \mbox{ and }|x_3|<2r  \}$. In particular, we want $u_0(x)=0$ on $S^c$ and hence $\| u_0\|_{L^\I(S^c)}<\be \|u_0\|_{L^\I}$ for any $\be\in (0,1)$. Additionally, for any $\ell>0$ and $\varepsilon>0$ we may take $r$ small compared to $\ell$ so that 
\[
\sup_{x\in \R^d} \frac{|S\cap B_\ell(x)|} {|B_\ell(x)|} \leq  \varepsilon.
\]
Because we are free to choose any values for $\be,$ $\varepsilon$ and $\ell$, we may  choose them to be the values stated in the theorem. 
\end{example}
}

\begin{proof}[Proof of Theorem~\ref{thm:regcrit}] It suffices to show that, for a time $t'\in (0,\bar{T})$, we have 
\begin{equation}
    \label{eq:makemuhudrop}
\|u(\cdot,t')\|_{L^p} T_0^{\frac{1}{2} (1-\frac{d}{p})} \leq c_p
\end{equation}
since this guarantees that the solution is strong on the time interval $(0,t'+T_0)$. We  achieve~\eqref{eq:makemuhudrop} by applying Proposition~\ref{prop.decreasing} with $\gamma$ as above and setting \da{$t' = T_1$}, provided that $u_0$ is appropriately $(\varepsilon,\beta,\ell)$-sparse in $L^p$.
\end{proof}

\begin{proof}[Proof of Corollary~\ref{cor:blowupcrit}]
This follows immediately from Theorem~\ref{thm:regcrit} applied at time $t \in (0,T^*)$ with $u_0 = u(\cdot,t)$ and $T_0= T^*-t $.
\end{proof}
 

\begin{proof}[Proof of Theorem~\ref{thm:apriori}]
Let $b \in (0,1)$ and define
\begin{equation}
    \label{eq:Stdef}
S_t = \{ x: |u(x,t)| > b \ell_0^{-\frac{d}{p}} \| u(\cdot,t)\|_{L^p} \}.
\end{equation}
Chebyshev's inequality~\eqref{eq:chebyshev} with $f = |u|^2$ and $\lambda = b^2 \ell_0^{-\frac{2d}{p}} \| u(\cdot,t) \|_{L^p}^2$ gives
\begin{equation}
    | S_t | \leq \frac{\ell_0^{\frac{2d}{p}} \| u(\cdot,t) \|_{L^2}^2}{b^2 \| u(\cdot,t) \|_{L^p}^2} = \frac{\ell_0^{\frac{2d}{p} + \frac{2}{\mu_p}}}{b^2}  = \frac{\ell_0^d}{b^2}.
\end{equation}
As before, we consider balls with radius $\ell = a \ell_0$. Then the solution will be sparse at scale $\ell$ when $a \gg 1$:
\begin{equation}
|S_t \cap B_\ell(x_0) | \leq \frac{C_d}{a^d b^2} |B_\ell|,
\end{equation}
provided that we verify that $\| u(\cdot,t) \|_{L^p(S^c)} < \| u(\cdot,t) \|_{L^p}$ (when $p=+\infty$, this step is automatic). By interpolation and the definition of $S_t$, we have
\begin{equation}
\begin{aligned}
  \| u(\cdot,t)\|_{L^p(S^c)} &\leq \| u(\cdot,t)\|_{L^\I(S^c)}^{1-\frac{2}{p}} \| u(\cdot,t)\|_{L^2}^{\frac 2 p}
\\&\overset{\eqref{eq:Stdef}}{\leq} \left( b \ell_0^{-\frac{d}{p}} \| u(\cdot,t) \|_{L^p} \right)^{1-\frac{2}{p}} \|u(\cdot,t)\|_{L^2}^{\frac 2 p}
\\&= b^{1-\frac{2}{p}} \ell_0^{-\frac{d}{p}(1-\frac{2}{p})} \left( \frac{\| u(\cdot,t) \|_{L^2}}{\| u(\cdot,t) \|_{L^p}} \right)^{\frac{2}{p}} \| u(\cdot,t) \|_{L^p} \\
&\overset{\eqref{eq:ellnotdef}}{\leq} b^{1-\frac{2}{p}} \| u(\cdot,t) \|_{L^p}
\end{aligned}
\end{equation}
Choosing $b$ to satisfy $b^{1-\frac{2}{p}} = \beta$ and $a$ sufficiently large depending on $\varepsilon$ and $\beta$ completes the proof.
\end{proof}

\subsection{Frequency sparseness}
\label{sec:frequencysparseness}

In this section, we explore analogous theorems concerning frequency sparseness, see Definition~\ref{def:frequencysparseness}. 

\da{Our first result is an analogue of Proposition \ref{prop.decreasing}.}
\begin{proposition}\label{prop.decreasing.frequency}
Let $d < p\leq \infty$ and $u_0\in L^p_\sigma(\R^d)$. For any $\ga\in (0,1)$, \da{define $T_1\in (0,\bar T)$ according to \eqref{eq:Tdef}.} Then \da{$T_1$} satisfies the following properties: First, the strong $L^p$ solution $u$ of the Navier-Stokes equations with initial data $u_0$ exists on $\R^d \times (0,\da{T_1}]$. Moreover, for any $t\in (0,\da{T_1}]$ if $u_0$ is  $(\be,J)$-sparse where
\[
\be =\frac \ga 2  ;\quad 2^J \sim \ga^{-1}t^{-1/2},
\]
then
\[
\|  u(\cdot,t)\|_{L^p} \leq \ga \|u_0\|_{L^p}.
\]
\end{proposition}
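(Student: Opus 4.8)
The proof of Proposition~\ref{prop.decreasing.frequency} should be a near-verbatim copy of the proof of Proposition~\ref{prop.decreasing}, with Lemma~\ref{lemma.heat} replaced by its frequency analogue, Lemma~\ref{lemma.heat.frequency}. I would first recall that for $d < p \leq \infty$ and $u_0 \in L^p_\sigma$, the standard Kato/Fujita fixed-point construction produces a unique mild solution on $(0,\bar T)$ with $\sup_{t \in (0,\bar T)} \| u(\cdot,t)\|_{L^p} \leq 2\| u_0\|_{L^p}$, and the Duhamel term obeys the subcritical bilinear estimate
\begin{equation*}
    \bigg\| \int_0^t e^{(t-s)\Delta} \mathbb P \nb \cdot (u \otimes u)\,ds \bigg\|_{L^p} \leq C t^{\frac12(1-\frac dp)} \| u_0 \|_{L^p}^2, \quad t \in (0,\bar T),
\end{equation*}
exactly as in~\eqref{ineq.Duhamel}. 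Then I would define $T = T(p,\ga,\|u_0\|_{L^p})$ by
\begin{equation}
    \label{def.T.frequency}
    \| u_0 \|_{L^p} T^{\frac12(1-\frac dp)} = \frac{\ga}{2C},
\end{equation}
increasing $C$ if necessary to guarantee $T < \bar T$, so that for all $t \in (0,T)$ the Duhamel term is bounded by $\tfrac{\ga}{2}\|u_0\|_{L^p}$.

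Next I would apply Lemma~\ref{lemma.heat.frequency} with $\ga/2$ in place of $\ga$ there: it provides $J \in \Z$ with $2^J \sim (\ga/2)^{-1} t^{-1/2} \sim \ga^{-1} t^{-1/2}$ and $\be = \ga/4$ (or, matching the statement, it suffices to take $\be = \ga/2$ and absorb the factor; I would just state the hypothesis as $\be = \ga/2$, $2^J \sim \ga^{-1}t^{-1/2}$ and check the constants line up) such that, under the $(\be,J)$-sparseness hypothesis on $u_0$,
\begin{equation*}
    \| e^{t\Delta} u_0 \|_{L^p} \leq \frac{\ga}{2} \| u_0 \|_{L^p}.
\end{equation*}
Adding this to the Duhamel bound via the mild formulation $u(\cdot,t) = e^{t\Delta} u_0 - \int_0^t e^{(t-s)\Delta}\mathbb P\nb\cdot(u\otimes u)\,ds$ and the triangle inequality yields $\| u(\cdot,t)\|_{L^p} \leq \ga \| u_0\|_{L^p}$ for every $t \in (0,T)$, which is the claim.

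\textbf{Main obstacle.} There is no serious obstacle; the only point requiring a little care is bookkeeping of the numerical constants so that the two halves each contribute at most $\ga/2$ and so that the $\be$ and $2^J$ appearing in the statement are precisely those delivered by Lemma~\ref{lemma.heat.frequency} (the lemma is stated with $\be = \ga/2$ and $2^J \sim \ga^{-1}t^{-1/2}$, so after the substitution $\ga \mapsto \ga/2$ one must either rename constants or simply note the implied constant in $\sim$ absorbs the factor of $2$). One should also double-check that $T < \bar T$ can indeed be arranged, since Lemma~\ref{lemma.heat.frequency} itself poses no upper restriction on $t$ but the bilinear estimate~\eqref{ineq.Duhamel} is only claimed on $(0,\bar T)$; this is handled exactly as in Proposition~\ref{prop.decreasing} by enlarging $C$ in~\eqref{def.T.frequency}.
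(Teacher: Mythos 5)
Your proposal is correct and follows essentially the same route as the paper's proof: define $T$ by the relation \eqref{def.T.frequency} (enlarging $C$ so that $T \leq \bar T$), bound the Duhamel term by $\tfrac{\ga}{2}\|u_0\|_{L^p}$ via the subcritical bilinear estimate \eqref{ineq.Duhamel}, and apply Lemma~\ref{lemma.heat.frequency} to bound $\|e^{t\Delta}u_0\|_{L^p}$ by $\tfrac{\ga}{2}\|u_0\|_{L^p}$ under the frequency-sparseness hypothesis. Your extra remarks on constant bookkeeping (absorbing the factor of $2$ into the $\sim$ in $2^J$) address a detail the paper itself glosses over, so nothing is missing.
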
 

\begin{proof}
\da{From \eqref{ineq.Duhamel} and the definition of $T_1$,
for all $t \in (0,T_1]$, we have }
\begin{equation}
\bigg\|  \int_0^t e^{(t-s)\Delta}\mathbb P \nb \cdot (u\otimes u)\,ds\bigg\|_{L^p}
\leq  \frac \ga 2 \|u_0\|_{L^p}.
\end{equation}
Applying Lemma \ref{lemma.heat.frequency} ensures that, if $u_0$ is $(\ga/2, J)$-sparse where $J$ is defined by $2^J\sim \ga^{-1}t^{-1/2}$ then 
\begin{equation}
\|  e^{t\Delta}u_0 \|_{L^p}\leq \frac \ga 2 \|u_0\|_{L^p}.     
\end{equation}
This concludes the proof.
\end{proof}

Based on Proposition \ref{prop.decreasing.frequency}, we may revisit Theorems \ref{thm:regcrit} and \da{\ref{thm:apriori}, as well as Corollary \ref{cor:blowupcrit}}, from the frequency perspective.

\begin{corollary}[Frequency regularity criterion]\label{cor.zoran.sparseness.frequency}
Let $d < p\leq \I$, $u_0\in L^p_\sigma$, and \da{$T_0 \geq \bar{T}$}. Let $\gamma$ be  defined by
\begin{equation}
    \| u_0 \|_{L^p} T_0^{\frac{1}{2}(1-\frac{d}{p})} \gamma = c_p.
\end{equation}
\da{Define $T_1$ according to \eqref{eq:Tdef}.}
Assume that $u_0$ is $(\be,J)$-sparse in frequency in $L^p$, where 
\begin{equation}
\be = \frac{\gamma}{2}; \quad 2^J \sim \gamma^{-1} \da{T_1^{-1/2}}
\end{equation}
and the suppressed constants are dimensionless and universal.
Then, the unique mild solution for $u_0$ exists and remains smooth beyond time $T_0$.
\end{corollary}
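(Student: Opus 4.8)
The statement is the frequency-sparseness analogue of Theorem~\ref{thm:regcrit}, and it should follow from Proposition~\ref{prop.decreasing.frequency} in exactly the same way that Theorem~\ref{thm:regcrit} follows from Proposition~\ref{prop.decreasing}. The plan is to reduce global-in-time existence past $T_0$ to producing a single time $t'$ in the short interval of guaranteed existence at which the $L^p$ norm has dropped enough that $\bar{T}(d,p,\|u(\cdot,t')\|_{L^p}) > T_0$; the subcritical well-posedness theory recalled before Theorem~\ref{thm:regcrit} then restarts the solution from time $t'$ and carries it past $t' + T_0 > T_0$, with uniform $L^p$ control, and parabolic smoothing upgrades it to a smooth solution.

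\textbf{Key steps.} First, unwind the definition of $\gamma$: the hypothesis $\|u_0\|_{L^p} T_0^{\frac12(1-d/p)}\gamma = c_p$ is the same normalization as in Theorem~\ref{thm:regcrit}, and since $c_p \in (0,1]$ we have $\gamma \in (0,1)$ (the case $T_0 \le \bar T$ being trivial, as there the solution already exists past $T_0$). Second, apply Proposition~\ref{prop.decreasing.frequency} with this $\gamma$: it furnishes a time $T = T(p,\gamma,\|u_0\|_{L^p}) \in (0,\bar T)$ such that the strong $L^p$ solution exists on $(0,T]$ and, because $u_0$ is $(\beta,J)$-sparse in frequency with $\beta = \gamma/2$ and $2^J \sim \gamma^{-1} T^{-1/2}$, one has $\|u(\cdot,T)\|_{L^p} \le \gamma \|u_0\|_{L^p}$. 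Third, combine this with the definition of $\gamma$ to get
\begin{equation}
\|u(\cdot,T)\|_{L^p} T_0^{\frac12(1-\frac{d}{p})} \le \gamma \|u_0\|_{L^p} T_0^{\frac12(1-\frac{d}{p})} = c_p,
\end{equation}
which is exactly the condition $\|u(\cdot,T)\|_{L^p}\, T_0^{\frac12(1-d/p)} \le c_p$, i.e.\ $\bar T(d,p,\|u(\cdot,T)\|_{L^p}) \ge T_0$. Fourth, invoke the subcritical existence result (stated just before Theorem~\ref{thm:regcrit}) with initial data $u(\cdot,T) \in L^p_\sigma$: the mild solution starting from $u(\cdot,T)$ exists on $(T, T + \bar T(d,p,\|u(\cdot,T)\|_{L^p})] \supseteq (T, T + T_0]$, hence (by uniqueness of mild $L^p$ solutions) the original solution continues past $T_0$; finally, interior parabolic regularity for mild solutions promotes this to smoothness beyond $T_0$.

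\textbf{Bookkeeping to check.} There are two mild wrinkles rather than genuine obstacles. One is matching the frequency parameter $J$: Proposition~\ref{prop.decreasing.frequency} fixes $2^J \sim \gamma^{-1} T^{-1/2}$ with $T < \bar T$, whereas the Corollary is phrased with $2^J \sim \gamma^{-1} T_0^{-1/2}$. Since $T \le T_0$ (indeed $T < \bar T < T_0$), the length scale $2^{-J}$ associated to $T_0$ is smaller than the one associated to $T$, and sparseness in frequency at a lower cutoff $J$ implies it at any higher cutoff; one should state the hidden constants so that the hypothesis at scale $T_0$ implies the hypothesis at scale $T$, which is routine but should be spelled out. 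The other is simply confirming, as in the proof of Theorem~\ref{thm:regcrit}, that one only needs the norm to drop below the threshold $c_p T_0^{-\frac12(1-d/p)}$ at \emph{one} time $t' = T$, not uniformly, because the guaranteed-existence window reopens from that time. I do not expect any real difficulty; the content is entirely in Proposition~\ref{prop.decreasing.frequency} and Lemma~\ref{lemma.heat.frequency}, and the Corollary is the packaging.
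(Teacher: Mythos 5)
Your overall route is the same as the paper's: apply Proposition~\ref{prop.decreasing.frequency} with the $\gamma$ fixed by $\|u_0\|_{L^p}T_0^{\frac12(1-\frac dp)}\gamma=c_p$ to force $\|u(\cdot,t')\|_{L^p}\le\gamma\|u_0\|_{L^p}$ at some $t'\in(0,\bar T)$, deduce $\|u(\cdot,t')\|_{L^p}T_0^{\frac12(1-\frac dp)}\le c_p$, and restart the subcritical theory from $t'$ to continue past $T_0$ (the paper takes $t'=T/2$ rather than $t'=T$, since the proposition is stated on the open interval $(0,T)$ --- a trivial point).

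The genuine problem is in your ``bookkeeping'' paragraph on matching the frequency parameter, where the directions are inverted twice. First, the monotonicity claim is backwards: $(\beta,J)$-sparseness in frequency means $\|\Delta_{<J}u_0\|_{L^p}\le\beta\|u_0\|_{L^p}$, and since $\|\Delta_{<J'}u_0\|_{L^p}\lesssim\|\Delta_{<J}u_0\|_{L^p}$ for $J'\le J$, sparseness at a \emph{higher} cutoff implies it (up to a constant) at a lower one, not the other way around; a function concentrated at an intermediate frequency shows the converse fails. Second, the comparison of the two cutoffs is also reversed: since $t'\le T<\bar T<T_0$, the cutoff Proposition~\ref{prop.decreasing.frequency} requires, $2^{J'}\sim\gamma^{-1}(t')^{-1/2}$, is \emph{larger} than the hypothesized $2^{J}\sim\gamma^{-1}T_0^{-1/2}$ (indeed, from \eqref{def.T.frequency} and the definition of $\gamma$ one gets $T\sim\gamma^{\frac{4}{1-d/p}}T_0$ up to constants, so $2^{J'}\sim\gamma^{-1-\frac{2}{1-d/p}}T_0^{-1/2}$). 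So the hypothesis as you read it is the \emph{weaker} condition, and your reduction does not bridge the gap. To repair the write-up you must impose the sparseness hypothesis at the frequency scale dictated by the application time, i.e.\ interpret the implicit constant in $2^J\sim\gamma^{-1}T_0^{-1/2}$ (as the paper does by asking that $u_0$ be ``appropriately'' $(\beta,J)$-sparse) so that $2^J$ is at least the cutoff $\gamma^{-1}(t')^{-1/2}$ used in Proposition~\ref{prop.decreasing.frequency}, or equivalently assume sparseness at any $J'\ge$ that cutoff, which is the stronger hypothesis; with that adjustment the rest of your argument goes through exactly as in the paper.
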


\begin{proof} 
 It suffices to show that, for a time $t'\in (0,\bar{T})$, we have 
\begin{equation}
    \label{eq:makemuhudrop2}
\|u(\cdot,t')\|_{L^p} T_0^{\frac{1}{2} (1-\frac{d}{p})} \leq c_p
\end{equation}
since this guarantees that the solution is strong on the time interval $(0,t'+T_0)$. We can achieve~\eqref{eq:makemuhudrop2} by applying Proposition~\ref{prop.decreasing.frequency} with $\gamma$ as above,
taking \da{$t=T_1$} and requiring $u_0$ to be appropriately $(\be,J)$-sparse.\end{proof}

\da{
In the next corollary, we define $T_1(t)$ as in Corollary \ref{cor:blowupcrit}.}

\begin{corollary}[Frequency blow-up criterion]
Let $d < p\leq \I$ and $u_0\in L^p_\sigma$. Let $u$ be the unique strong solution for $u_0$. Suppose that the maximal time of existence $T^*$ is finite. 
Let $\gamma_p(t)$ be defined by  
\begin{equation}
     \| u(\cdot,t) \|_{L^p} (T^* - t)^{\frac{1}{2} (1-\frac{d}{p})} \gamma_p(t) = c_p.
\end{equation}
Let $J(t)$ be defined by the property that
\[
\| u_{<J}(\cdot,t)\|_{L^p} < \frac 1 2 \|u(\cdot,t)\|_{L^p}.
\]
Then
\[
2^{J(t)} {\les} \gamma^{-1} \da{T_1(t)}^{-\frac{1}{2}}
\]
In the case of Type-1 blow-up, i.e.,~$\| u(\cdot,t) \|_{L^p} \sim (T^*-t)^{-\frac{1}{2}(1-\frac{d}{p})}$, we have 
\[
2^{J(t)} {\les}  (T^*-t)^{{-\frac{1}{2}}}.
\]
 \end{corollary}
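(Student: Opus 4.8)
The plan is to obtain this corollary as the contrapositive of the frequency regularity criterion (Corollary~\ref{cor.zoran.sparseness.frequency}), in exact analogy with the way Corollary~\ref{cor:blowupcrit} is deduced from Theorem~\ref{thm:regcrit}. Fix $t\in(0,T^*)$ and apply Corollary~\ref{cor.zoran.sparseness.frequency} to the strong solution restarted at time $t$, that is, with initial datum $u(\cdot,t)\in L^p_\sigma$ (which lies in $L^p_\sigma$ and is smooth for $t>0$, so the restart is legitimate) and horizon $T_0=T^*-t$. The dimensionless parameter produced by the criterion is then precisely $\gamma_p(t)$, defined through $\|u(\cdot,t)\|_{L^p}(T^*-t)^{\frac12(1-\frac dp)}\gamma_p(t)=c_p$. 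The criterion asserts: if $u(\cdot,t)$ is $(\be,J)$-sparse in frequency in $L^p$ with $\be=\gamma_p(t)/2$ and $2^{J}\sim\gamma_p(t)^{-1}(T^*-t)^{-1/2}$, then the restarted solution remains smooth beyond time $t+T_0=T^*$. Since $T^*$ is the maximal time of existence, this is impossible, so $u(\cdot,t)$ \emph{fails} to be $(\gamma_p(t)/2,J_*)$-sparse in frequency at that scale; equivalently,
\[
  \|u_{<J_*}(\cdot,t)\|_{L^p}>\tfrac12\,\gamma_p(t)\,\|u(\cdot,t)\|_{L^p},\qquad 2^{J_*}\sim\gamma_p(t)^{-1}(T^*-t)^{-1/2}.
\]

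Next I would translate this failure of frequency sparseness into the stated upper bound on $J(t)$. The map $J\mapsto\|\Delta_{<J}u(\cdot,t)\|_{L^p}$ is nondecreasing, so the set of $J$ for which $\|u_{<J}(\cdot,t)\|_{L^p}$ lies below a fixed fraction of $\|u(\cdot,t)\|_{L^p}$ is a left half-line, and $J(t)$ is its right endpoint. The displayed lower bound shows that already at the scale $J_*$ the low-frequency portion of $u(\cdot,t)$ is a definite fraction of the whole, hence $J(t)$ cannot exceed $J_*$ by more than an absolute constant, i.e.\ $2^{J(t)}\lesssim\gamma_p(t)^{-1}(T^*-t)^{-1/2}$. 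For the Type-1 regime $\|u(\cdot,t)\|_{L^p}\sim(T^*-t)^{-\frac12(1-\frac dp)}$, the defining relation for $\gamma_p(t)$ forces $\gamma_p(t)\sim c_p$, a fixed constant, and the bound collapses to $2^{J(t)}\lesssim(T^*-t)^{-1/2}$.

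Given Corollary~\ref{cor.zoran.sparseness.frequency}, the argument is essentially immediate, so there is no substantive obstacle; the one point needing care is bookkeeping of the sparseness threshold. The regularity criterion delivers the failure of sparseness with threshold $\be=\gamma_p(t)/2$ rather than the plain $\tfrac12$ used in the definition of $J(t)$. To keep the deduction clean one should either define $J(t)$ via $\|u_{<J(t)}(\cdot,t)\|_{L^p}\sim\gamma_p(t)\|u(\cdot,t)\|_{L^p}$, or simply absorb the factor $\gamma_p(t)$ into the implicit constant in the final $\lesssim$ (harmless, since $\gamma_p(t)\in(0,1]$, the latter because otherwise the restarted solution would exist past $T^*$ already by the basic existence theory). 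One should also observe that Corollary~\ref{cor.zoran.sparseness.frequency} holds for all $p\in(d,+\infty]$, so no separate treatment of $p=\infty$ is required.
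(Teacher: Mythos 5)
Your argument is exactly the paper's: the paper proves this corollary in one line by applying Corollary~\ref{cor.zoran.sparseness.frequency} at time $t$ with $u_0=u(\cdot,t)$ and $T_0=T^*-t$, which is precisely your contrapositive step. The threshold bookkeeping you flag is a genuine imprecision in the statement itself (the criterion only yields failure of frequency sparseness at level $\gamma_p(t)/2$, not $1/2$), and your first fix --- defining $J(t)$ via a threshold comparable to $\gamma_p(t)\|u(\cdot,t)\|_{L^p}$ --- is the right one, whereas absorbing $\gamma_p(t)$ into the implicit constant would not actually repair that step.
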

 \begin{proof}
 This follows immediately from Corollary~\ref{cor.zoran.sparseness.frequency} initiated at time $t \in (0,T^*)$ with $u_0 = u(\cdot,t)$ and $T_0= T^*-t$.
 \end{proof}


\begin{proposition}[\textit{A priori} frequency sparseness]Let $d<p\leq \infty$. Suppose that $u$ is a strong Navier-Stokes solution with initial data $u_0\in L^2_\sigma \cap L^p_\sigma$. Let $\ell_0$ and $\mu_p$ be defined as in Theorem \ref{thm:apriori}. Then, for any $\be\in (0,1)$, letting $2^{-J} \sim \ell_0 \da{\beta^{-\mu_p}}$, we have $u(\cdot,t)$ is $(\be, J)$-sparse in frequency.
\end{proposition}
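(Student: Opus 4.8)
The plan is to read off the frequency sparseness directly from Bernstein's inequality, which lets one trade the low-frequency cutoff $\Delta_{<J}$ for a gain of integrability, passing from the a priori controlled $L^2$ norm up to the $L^p$ norm; the admissible range of $J$ then comes out of balancing the two sides. By Definition~\ref{def:frequencysparseness}, it suffices to prove
\[
\| \Delta_{<J} u(\cdot,t) \|_{L^p} \leq \beta \| u(\cdot,t) \|_{L^p},
\]
for every $t \in (0,T^*(u_0))$; if $u(\cdot,t)\equiv 0$ the claim is vacuous, so we may assume $\|u(\cdot,t)\|_{L^p} \neq 0$ and hence that $\ell_0(t)$ in~\eqref{eq:ellnotdef} is well defined.

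First I would apply Bernstein's inequality for the low-frequency projection (see~\cite[Chapter 2]{BahouriCheminDanchin}): since $2 \leq p \leq \infty$, one has $\| \Delta_{<J} f \|_{L^p} \lesssim 2^{J d(\frac12 - \frac1p)} \| f \|_{L^2}$ with a dimensionless constant, and $d(\frac12 - \frac1p) = \frac{1}{\mu_p}$ by the definition of $\mu_p$ in Theorem~\ref{thm:apriori}. Applying this with $f = u(\cdot,t)$ — which lies in $L^2$ because $u$ is a strong solution with data in $L^2_\sigma \cap L^p_\sigma$ — gives $\| \Delta_{<J} u(\cdot,t) \|_{L^p} \lesssim 2^{J/\mu_p} \| u(\cdot,t) \|_{L^2}$. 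Substituting the definition~\eqref{eq:ellnotdef} of $\ell_0(t)$ in the form $\| u(\cdot,t) \|_{L^2} = \ell_0(t)^{1/\mu_p} \| u(\cdot,t) \|_{L^p}$ turns this into
\[
\| \Delta_{<J} u(\cdot,t) \|_{L^p} \lesssim \bigl( 2^J \ell_0(t) \bigr)^{1/\mu_p} \| u(\cdot,t) \|_{L^p}.
\]

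It then remains only to choose $J$ so that the prefactor is at most $\beta$: taking $2^{-J} \sim \ell_0(t)\, \beta^{-\mu_p}$, with the implied constant absorbing the (dimension-dependent) Bernstein constant raised to the power $\mu_p$, yields $(2^J \ell_0(t))^{1/\mu_p} \leq \beta$, which is exactly $(\beta,J)$-sparseness in frequency. I do not expect a genuine obstacle: the argument is essentially a one-line constant-tracking computation, and the only thing requiring care is which dimensionless constants are absorbed into the relation between $2^{-J}$, $\ell_0(t)$ and $\beta$ — note that, in contrast with the physical-space Theorem~\ref{thm:apriori}, the volume of the unit ball plays no role and only the Littlewood--Paley/Bernstein constant enters, so the dependence on $\beta$ is cleaner. (An alternative, slightly more roundabout route would be to feed the spatial a priori sparseness of Theorem~\ref{thm:apriori} into Lemma~\ref{lemma:sparial.vs.frequency}, but this forces one to juggle the auxiliary parameters $(\varepsilon,\beta,\bar{\ell})$ of that lemma while producing the same conclusion with worse constants, so the direct Bernstein argument is preferable.)
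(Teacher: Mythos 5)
Your argument is essentially the paper's own proof: a single low-frequency Bernstein estimate from $L^2$ to $L^p$ combined with the definition of $\ell_0$ (the paper writes the bound with $\| u_0 \|_{L^2}$ via the energy inequality where you use $\| u(\cdot,t) \|_{L^2}$, an immaterial difference). The only point worth flagging is that your bookkeeping yields the relation $2^{-J} \gtrsim \ell_0\, \beta^{-\mu_p}$ rather than the $\ell_0\, \beta^{-1/\mu_p}$ appearing in the statement; the Bernstein computation supports your exponent (the two coincide only when $\mu_p = 1$, and for $\mu_p > 1$ the stated choice of $J$ would not give the conclusion by this argument), so this appears to be a typo in the statement, but you should say explicitly that you are proving the corrected scaling rather than silently substituting it.
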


\begin{proof}
By Bernstein's inequality, we have,
\begin{equation}
    \| \Delta_{\leq J} u(\cdot,t) \|_{L^p} \lesssim \frac {2^{J(\frac d 2 - \frac d p)} \| u_0 \|_2 } { \| u(\cdot,t)\|_{L^p}} \| u(\cdot,t)\|_{L^p}.
\end{equation}
The proposition follows readily.
\end{proof}

We now use the heat kernel estimates to give \da{yet another proof}  (after \da{Cheskidov and Shvydkoy~\cite{Cheskidov2009}}, Farhat, Gruji\'c, and Leitmeyer~\cite{FarhatB1infty,FarhatB1inftyerr}, and Hmidi and Li~\cite{HmidiLi}) that if a strong solution is small in $\dot B^{-1}_{\I,\I}$ on any interval of time then it can be smoothly extended to a larger interval of time. \da{This illustrates a positive connection between sparseness and regularity.}

\begin{proposition}\label{prop:smallBesov}Let $u_0 \in L^\I_\sigma$. Let $u$ be the unique strong solution with initial data $u_0$. Let $T^* \in (0,+\infty]$ be the maximal time of existence. There exists a universal constant $\e_*$ such that if 
\begin{equation}
\sup_{0<t<T^*}\| u(\cdot,t)\|_{\dot B^{-1}_{\I,\I}} <\e_*,    
\end{equation}
then $T^* = +\infty$.
\end{proposition}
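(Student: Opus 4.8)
\textbf{Proof plan for Proposition~\ref{prop:smallBesov}.}

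The plan is to run a bootstrap argument in the subcritical space $L^\infty$, using the heat-flow smoothing mechanism of Lemma~\ref{lemma.heat} in the guise of the estimate $\| e^{t\Delta} f \|_{L^\infty} \les t^{-1/2} \| f \|_{\dot B^{-1}_{\infty,\infty}}$, and controlling the Duhamel term by the local-existence estimate~\eqref{ineq.Duhamel}. Fix a time $t_0 \in (0,T^*)$, and write the mild solution on $(t_0, t_0 + \tau)$ as $u(t) = e^{(t-t_0)\Delta} u(t_0) + \int_{t_0}^t e^{(t-s)\Delta} \mathbb{P}\nabla\cdot(u\otimes u)\,ds$. The first term obeys $\| e^{(t-t_0)\Delta} u(t_0)\|_{L^\infty} \les (t-t_0)^{-1/2}\| u(t_0)\|_{\dot B^{-1}_{\infty,\infty}} < \e_* (t-t_0)^{-1/2}$, which is exactly the caloric-decay phenomenon that drives the whole paper: smallness in the critical Besov norm forces the linear part to be small in the subcritical norm after a short time.

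The key steps, in order, would be: (i) introduce the scaling-invariant quantity $M(\tau) := \sup_{t_0 < t < t_0+\tau} (t-t_0)^{1/2}\| u(t)\|_{L^\infty}$ on the maximal interval of existence past $t_0$; (ii) estimate the linear part by $\e_*$ as above; (iii) estimate the bilinear term pointwise in $t$ by $\les \int_{t_0}^t (t-s)^{-1/2}\| u(s)\|_{L^\infty}^2\,ds \les M(\tau)^2 \int_{t_0}^t (t-s)^{-1/2}(s-t_0)^{-1}\,ds$, where the last integral is $\les (t-t_0)^{-1/2}$ by the standard Beta-function computation; (iv) combine to get $M(\tau) \les \e_* + M(\tau)^2$, uniformly in $\tau$; (v) observe $M(0^+) = 0$ (or is bounded by the short-time existence theory), so a continuity/barrier argument gives $M(\tau) \les \e_*$ for all $\tau$ provided $\e_*$ is small enough; (vi) conclude that $\limsup_{t\to T^*}(t-t_0)^{1/2}\|u(t)\|_{L^\infty}$ stays bounded, hence $\| u(t)\|_{L^\infty}$ does not blow up at any finite $T^* > t_0$, and since $t_0$ was arbitrary, a standard continuation criterion for $L^\infty$ mild solutions forces $T^* = +\infty$.

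The main obstacle, and the only point requiring care, is step (v): one must make the a~priori bound $M(\tau)\les \e_*$ genuinely uniform rather than circular. The clean way is to note that $M$ is continuous and nondecreasing in $\tau$ with $M(\tau)\to 0$ as $\tau \to 0^+$ (by short-time smoothing of the $L^\infty$ solution, which is locally bounded away from $t_0$), so the set where $M(\tau) \leq 2C\e_*$ is relatively open and closed in the interval of existence; the quadratic inequality then prevents $M$ from ever reaching the threshold $2C\e_*$. A secondary technical point is justifying the pointwise-in-time bilinear estimate up to the endpoint—i.e.\ that the Oseen kernel bound $\| e^{t\Delta}\mathbb{P}\nabla\cdot F\|_{L^\infty}\les t^{-1/2}\| F\|_{L^\infty}$ holds and that $u\otimes u \in L^\infty$ for $s \in (t_0, t)$—but this is exactly the content of the standard mild-solution theory already invoked for~\eqref{ineq.Duhamel}, so it may be cited rather than reproved. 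One should also remark that the borderline singularity $(s-t_0)^{-1}$ in the bilinear integral is integrable against $(t-s)^{-1/2}$ only because the power is strictly less than $1$ after accounting for the $(t-s)^{-1/2}$ weight; this is the familiar subtlety in Besov-space persistence arguments and is resolved by the Beta function $\int_0^1 (1-\theta)^{-1/2}\theta^{-1}\,d\theta$—wait, that diverges, so in fact one should instead split the time integral at the midpoint $(t_0+t)/2$ and bound each half separately, using $\|u(s)\|_{L^\infty}\les \e_*(s-t_0)^{-1/2}$ near $t_0$ and $\les M(\tau)(t-t_0)^{-1/2}$ on the second half; this two-piece splitting is the genuinely necessary trick and should be carried out explicitly.
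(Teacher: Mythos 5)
Your strategy---treat the problem as a perturbation of the heat flow in the critical weighted norm $M(\tau)=\sup_{t_0<t<t_0+\tau}(t-t_0)^{1/2}\|u(t)\|_{L^\infty}$, with the Besov smallness feeding only the linear term---has a genuine gap at steps (iii)--(iv), and the fix you propose at the end does not repair it. The bilinear estimate in this norm produces $\int_{t_0}^t (t-s)^{-1/2}(s-t_0)^{-1}\,ds$, which diverges logarithmically at $s=t_0$; you noticed this, but the midpoint splitting does not help, because on the first half you again insert a bound of the form $\|u(s)\|_{L^\infty}\les \e_*(s-t_0)^{-1/2}$, whose square is the same non-integrable power $(s-t_0)^{-1}$: changing the prefactor from $M(\tau)$ to $\e_*$ does not remove the logarithm. (Using instead the local boundedness of $\|u(s)\|_{L^\infty}$ near $s=t_0$ makes the integral finite, but then the bound involves $\|u(t_0)\|_{L^\infty}$, which is not small, so the inequality $M\les \e_*+M^2$ is never actually obtained.) This is not a technicality: the Navier--Stokes bilinear operator is not bounded on the Kato-type space associated with $\dot B^{-1}_{\I,\I}$, which is exactly why smallness of the \emph{initial data} alone in $\dot B^{-1}_{\I,\I}$ is not known to give regularity (cf.\ the norm-inflation results for this space). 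Since your plan uses the hypothesis only at the single time $t_0$, it is in effect attempting that stronger, open statement.

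The hypothesis that the \emph{solution} is small in $\dot B^{-1}_{\I,\I}$ at every $t<T^*$ must enter the nonlinear part of the argument. The paper does this while staying entirely subcritical: it argues by contradiction with escape times, uses the lower bound $\|u(\cdot,t)\|_{L^\I}\geq c_\infty (T^*-t)^{-1/2}$, and observes via Bernstein that $\|\Delta_{<J}u(\cdot,t)\|_{L^\I}\les 2^J\|u(\cdot,t)\|_{\dot B^{-1}_{\I,\I}}\les \e_*\|u(\cdot,t)\|_{L^\I}$ with $2^J\sim (T^*-t)^{-1/2}$, i.e.\ $u(\cdot,t)$ is frequency sparse at the scale of the remaining time; Proposition~\ref{prop.decreasing.frequency}, whose Duhamel term is controlled on the short, subcritical time scale through~\eqref{ineq.Duhamel} (so no critical bilinear estimate is ever needed), then yields $\|u(\cdot,t')\|_{L^\I}\leq \|u(\cdot,t)\|_{L^\I}$ at $t'=t+(T^*-t)/2$, contradicting the existence of escape times. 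If you prefer a bootstrap in the spirit of your plan, you must first linearize the quadratic term using the smallness of the critical norm of $u(s)$ at \emph{every} time $s$ (a paraproduct splitting as in Hmidi--Li), at the cost of controlling a higher norm; one way or another, the all-time smallness has to appear inside the nonlinear estimate.
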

 
\begin{proof}
Assume that $T^* < +\infty$.
Note that $u\in C((0,T);L^\I)$. Since $\| u(\cdot,t)\|_{L^\I}$ blows up as $t\to T^*_-$,  for any $M>\| u_0\|_{L^\I}$, there exists $t_M$ so that $\|u(\cdot,t_M)\|_{L^\I} = M$ and $\sup_{t_M<t<T^*}\| u(\cdot,t)\|_{L^\I} >M$. The times $t_M$ are called \emph{escape times}. We will arrive at a contradiction by proving no escape times exist. Note that, for all $t$ sufficiently close to $T^*$, 
\begin{equation}
    \label{lower.bound}
\| u(\cdot,t)\|_{L^\I} \geq c_\infty (T^*-t)^{-\frac 12}.
\end{equation}
Let $\ga = 1/2$ and $2^{J} = 2 (t'-t)^{-1/2}$ where $t' = t+ (T^*-t)/2 = T^*/2 +t/2$.  We will show that if $\|u\|_{L^\I_{t} \dot B^{-1}_{\I,\I}}$ is small then $u(\cdot,t)$ is $(\ga, J)$ sparse and, by Proposition~\ref{prop.decreasing.frequency}, $\|u(\cdot,t')\|_{L^\I} \leq \| u(\cdot,t)\|_{L^\I}$. This implies $t$ is not an escape time. Since $t$ was arbitrary, there exist no escape times close to $T^*$ and, therefore, $T^*$ is not a blow-up time.

We have by the definitions of the norm of $\dot B^{-1}_{\I,\I}$ and   $t'$ that 
\begin{equation}
\begin{aligned}
   \| \Delta_{\leq J} u(\cdot,t)\|_{L^\I} &\leq C2^J \| u (\cdot,t)\|_{\dot B^{-1}_{\I,\I}} \\
   &= C \bigg(\frac 1 2 (T^*-t) \bigg)^{-\frac 1 2} \| u (\cdot,t)\|_{\dot B^{-1}_{\I,\I}}
\\&\lesssim  \|u(\cdot,t)\|_{L^\I}\| u (\cdot,t)\|_{\dot B^{-1}_{\I,\I}}, 
\end{aligned}
\end{equation}
where we  assume $t$ is close enough to $T^*$ for \eqref{lower.bound} to hold. Requiring
\begin{equation}
\| u \|_{L^\I_{t}\dot B^{-1}_{\I,\I}} \ll \be
\end{equation}
implies $u(\cdot,t)$ is $(\be,J)$-sparse.   
\end{proof}

\section{Examples}
\label{sec:examples}

\da{In this section, we examine the sparsity classes $Z^{(k)}_{\alpha}$ originally introduced in~\cite{grujic2019asymptotic} through the lens of concrete examples. The ansatz we consider is motivated by known singularity formation for related nonlinear PDEs, e.g., semilinear heat equations (see~\cite{Collotsurvey} and the references therein),  the three-dimensional compressible Navier-Stokes equations~\cite{merle2019implosion}, and Burgers equation with fractional dissipation~\cite{oh2021gradient,chickering2021asymptotically}, among others. }


For $x \in \R^d$ and $t \in (-T,0)$, we define the (backward) similarity variables
\begin{equation}
    \label{eq:similarityvars1}
	y = \frac{x}{(-t)^{\zeta_x}}, \quad s = -\log (-t)
\end{equation}
and
\begin{equation}
    \label{eq:theexampleansatz}
	u(x,t) = \frac{1}{(-t)^{\zeta_t}} U( y, s) \, ,
\end{equation}
where \zb{$u:\R^d\times (-T,0)\to \R^d$ is a vector field and} $\zeta_x,\zeta_t > 0$ are two positive exponents. \da{Let $S = - \log T$.} 
\zb{The vector field $U  : \R^d \times (S,+\infty) \to \R^d$ can be viewed as a `similarity profile.'}
\zb{Notably,   $U$ can remain controlled for $s>S$ and still correspond to blow-up in $u$ as $t\to 0^-$.} \zb{For every $k\in \N_0$,} we assume that $U$ satisfies 
\begin{equation}
    \da{ \sup_{s \in (S,+\infty)} } \|  \nabla_y^k U(\cdot,s) \|_{L^\infty} \leq C_k
\end{equation}
for some $C_k > 0$,
\begin{equation}
    \label{eq:decayofsolution}
    \da{ \sup_{s \in (S,+\infty)}  } |\nabla^k_y U| \; \da{ \to 0 \text{ as } |y| \to +\infty} \, , 
\end{equation}
and there exists $R_k > 0$ such that
\begin{equation}
    \label{eq:similarityvars5}
    \da{ \inf_{s \in (S,+\infty)} } \| \nabla_y^k U \|_{L^\infty(B_{R_k})} \geq c_k
\end{equation}
for some $c_k > 0$.   

\da{If $u$ satisfies the Navier-Stokes equations
\begin{equation}
    \label{eq:nse}
    \p_t u + u \cdot \nabla u - \Delta u + \nabla p = 0 \, , \quad \div u = 0 \, ,
\end{equation}
then $U$ satisfies the system 
\begin{equation}
\begin{aligned}
    \label{eq:similaritynavierstokes}
    &\p_s U + \zeta_t U + \zeta_x y \cdot \nabla_y U + e^{-(1-\zeta_t-\zeta_x) s} U \cdot \nabla_y U - e^{-(1-2\zeta_x)s} \Delta U + \nabla P = 0 \\
    &\div U = 0 \, .
    \end{aligned}
\end{equation}

The canonical choice of exponents in~\eqref{eq:similaritynavierstokes} is $\zeta_t = \zeta_x = 1/2$, which causes the exponential prefactors to be autonomous. With this choice, if moreover $U$ were steady, then $u$ would be a backward self-similar solution, as proposed in Leray's work~\cite[(3.11), p. 225]{leray}. Such solutions were excluded by~\cite{necasruzsverak,tsai,tsaierratum}. If instead $U$ were time-periodic, then $u$ would be a backward discretely self-similar solution; this scenario has not been ruled out.

A different but natural choice is $\zeta_t + \zeta_x = 1$ and $0 < \zeta_x < 1/2$, which causes the exponential prefactor in front of the term $U \cdot \nabla_y U$ to become autonomous, whereas the prefactor in front of $\Delta U$ converges to zero as $s \to +\infty$. In this scenario, the Navier-Stokes singularity would be a perturbation of an Euler singularity. \cmz{We emphasize that the above scenario is purely speculative, and the currently known Euler singularities~\cite{elgindiblowup} (see also the numerical predictions in~\cite{luo2014potentially}) cannot, to the best of our knowledge, be readily perturbed to Navier-Stokes singularities in this fashion.} Analogous blow-ups, in which the dissipation is perturbative, are known to occur for the three-dimensional compressible Navier-Stokes equations~\cite{merle2019implosion} and the Burgers equation with fractional dissipation~\cite{oh2021gradient,chickering2021asymptotically}.}  

\da{The ansatz~\eqref{eq:similarityvars1}-\eqref{eq:similarityvars5} may be thought of as describing a `shape' of blow-up, and it encompasses a wide range of possible behaviors, as it is not known  how a hypothetical Navier-Stokes singularity might realistically look. In particular, the decay requirement~\eqref{eq:decayofsolution} is general and might also capture functions beyond the Leray-Hopf class. Typically the `correct' behavior as $|y| \to +\infty$ may depend on the choice of exponents. For example, it does not make sense to generally impose that $U \in L^\infty_s L^2_y(\R^d \times (S,+\infty))$, as this would cause the blow-up profile $u(\cdot,0)$ to vanish identically when $\zeta_t = \zeta_x = 1/2$, thereby violating backward uniqueness~\cite{escauriazasereginsverak}. One can also imagine an inner blow-up region `glued' to a smooth outer region, as can be made rigorous for the harmonic map heat flow~\cite{weiharmonicmap}.

Our main observation is the following:
} 






\da{
\begin{proposition}
Let $d \geq 2$, $T, \zeta_t, \zeta_x > 0$, and $u \: \R^d \times (-T,0) \to \R^d$ be a vector field satisfying the hypotheses~\eqref{eq:similarityvars1}-\eqref{eq:similarityvars5}.  Then the following are equivalent:
\begin{enumerate}
    \item (Locally finite kinetic energy). $\sup_{t \in (-T,0)} \| u(\cdot,t) \|_{L^2(B_{(-t)^{\zeta_x}R_0 })} < +\infty$.
    \item (Sparsity). $u$ belongs to $Z^{(k)}_{\bar{\alpha}_k}$ uniformly in $t \in (-T,0)$. 
    \item (Exponents). $\zeta_t \leq d \zeta_x/2$.
\end{enumerate}
\end{proposition}
}

\da{In particular, within the class of examples we present here,  membership in $Z^{(k)}_{\bar{\alpha}_k}$ is not stronger than the condition of finite kinetic energy $u \in L^\infty_t L^2_x(\R^d \times (-T,0))$.}

\begin{proof}
 The \da{criteria~\eqref{eq:similarityvars1}-\eqref{eq:similarityvars5}} ensure two elementary sparseness properties on the profiles $U(\cdot,t)$:
 
 \da{\textbf{I}.} For any $\varepsilon, \beta \in (0,1)$, there exists $L_k \gg 1$ such that $\nabla_y^k U$ is sparse at scale $L_k$, uniformly in $s$. In this section, `sparse' means in the $L^\infty$ sense. Indeed, we define 
 $S = \{ |\nabla^k U| > \beta \| \nabla^k U \|_{L^\infty} \}$. Then
 \begin{equation}
     S \subset \{ |\nabla^k U| > \beta c_k \} \subset B_{\bar{R}_k}
 \end{equation}
 for sufficiently large $\bar{R}_k$. Hence,
 \begin{equation}
     |S \cap B_\ell(x_0)| \leq |B_{\bar{R}_k}| \leq \left( \frac{\bar{R}_k}{\ell} \right)^d |B_\ell|,
 \end{equation}
 and we choose $L_k = \ell \gg \bar{R}_k$.
 
 \da{\textbf{II}.} For any $\varepsilon,\beta \in (0,1)$, there exists $\ell_k \ll 1$ such that the components of $\nabla_y^k U$ are \emph{not} sparse at scale $\ell_k$. Indeed, choose $x_k$ (depending on $t$) such that $|\nabla^k U(x_k)| = \| \nabla^k U \|_{L^\infty}$. Since $\| \nabla^{k+1} U \|_{L^\infty} \leq C_{k+1}$, we have that for $\ell \ll_\beta \| \nabla^k U \|_{L^\infty} / C_{k+1}$,
 \begin{equation}
     |\nabla^k U(x_k)| > \beta \| \nabla^k U \|_{L^\infty} \text{ in } B_\ell(x_k).
 \end{equation}

We now verify the scales of sparseness in the above examples. Clearly, we have
\begin{equation}
	\nabla_x^k u = \frac{1}{(-t)^{\zeta_t+ k \zeta_x}} (\nabla_y^k U)(y,s).
\end{equation}
Therefore, \da{according to \textbf{I}}, the solution $u$ and its derivatives $\nabla^k_x u$ are sparse at scale $\ell = (-t)^{\zeta_x} L_k$. The norm of $\nabla^k u$ in $L^\infty$ is $\sim_k (-t)^{-(\zeta_t+ k \zeta_x)}$, bounded above and below up to multiplicative constants. Multiplying this exponent by \da{a number} $-\alpha_k$ \da{(depending on $\zeta_t$ and $\zeta_x$)} and matching it to the exponent $\zeta_x$, we discover that $u(\cdot,t) \in Z^{(k)}_{\alpha_k}$, uniformly in $t$, with
\begin{equation}
    \label{eq:alpharequirements}
	\alpha_k = \frac{\zeta_x}{\zeta_t + k \zeta_x} = \frac{1}{\frac{\zeta_t}{\zeta_x} + k}.
\end{equation}
\da{Hence,
\begin{equation}
    \label{eq:uisthere}
    u \text{ belongs to } Z^{(k)}_{\alpha_k} \text{ uniformly in } t \in (-T,0) \, .
\end{equation}
}
Moreover, \da{according to \textbf{II}}, we have that
\begin{equation}
    \label{eq:unotthere}
    u \text{ fails to belong to } Z^{(k)}_{\alpha} \text{ uniformly in } t \in (-T,0) \text{ for any } \alpha > \alpha_k \, .
\end{equation}

We now compute what exponents are admissible \da{under various conditions}. We begin by computing the norms
\begin{equation}
    \label{eq:inspecthexponents}
    \| \nabla^k_x u(\cdot,t) \|_{L^p\left( B_{(-t)^{\zeta_x} R_k} \right)} \sim_{p,k} (-t)^{- \zeta_t + \left( \frac{d}{p} - k\right) \zeta_x }.
\end{equation}

1. \emph{\da{Locally finite} kinetic energy}: By inspecting~\eqref{eq:inspecthexponents} with $p=2$ and $k= 0$, we observe that the exponents which keep~\eqref{eq:inspecthexponents} bounded \da{are precisely those satisfying}
\begin{equation}
    \label{eq:finitekineticenergy}
    \zeta_t  \leq \frac{d}{2}\zeta_x \, .
\end{equation}

2. \emph{The class $Z^{(k)}_{\bar{\alpha}_k}$}: Recall that
\begin{equation}
    \label{eq:baralphakdef}
    \bar{\alpha}_k = \frac{1}{k+\frac{d}{2}}
\end{equation}
is the exponent identified in~\cite{grujic2019asymptotic}.
By comparing~\eqref{eq:alpharequirements} with~\eqref{eq:baralphakdef}, we discover that, \da{due to~\eqref{eq:uisthere} and~\eqref{eq:unotthere}}, the condition \da{that $u$ belongs to $Z^{(k)}_{\bar{\alpha}_k}$ uniformly in $t \in (-T,0)$ is equivalent to $\alpha_k \geq \bar{\alpha}_k$, that is,}
\begin{equation}
    \label{eq:baralphareqs}
    \zeta_t \leq \frac{d}{2} \zeta_x,
\end{equation}
which is the same condition imposed by \da{locally} finite kinetic energy. \da{To conclude}, within the class of examples we present here, having \da{locally} finite kinetic energy and membership in $Z^{(k)}_{\bar{\alpha}_k}$ occur for exactly the same exponents.
\end{proof}

While the above observation is already our main point, it is interesting to discuss further requirements on the exponents:

3. \emph{Energy class}. If $\nabla_x u \in L^2_t L^2_x(B_{(-t)^{\zeta_x} R_k} \times (-T,0))$, then
\begin{equation}
    \zeta_t < \left( \frac{d}{2} - 1 \right) \zeta_x + \frac{1}{2}.
\end{equation}
That is, the Leray--Hopf class excludes more exponents than merely having \da{locally} finite kinetic energy does.

4. \emph{Known regularity criteria}. For $u$ to be a singular solution, it must satisfy $\| u(\cdot,t) \|_{L^\infty} > \da{c_\infty} (-t)^{-1/2}$, from which we obtain $\zeta_t \geq 1/2$. Similarly, if \emph{additionally} we assume \da{the decay condition} $U \in L^\infty_t L^{d,\infty}_y$, then the requirement $\liminf_{t \to 0_-} \| u(\cdot,t) \|_{L^{d,\infty}} > 0$ implies that \da{$\zeta_t \geq \zeta_x$.}  We assume the additional spatial decay requirement on the profile because~\eqref{eq:inspecthexponents} only describes the norm in a ball whose radius is shrinking to zero. The most reasonable spatial decay conditions on $U$ \da{depend on $\zeta_t$ and $\zeta_x$ and} are those which cause the `blow-up profile' $u(\cdot,0)$ to be well defined and non-trivial. \da{For example, one might instead require $|U| \sim |y|^{-\zeta_t/\zeta_x}$ as $|y| \to +\infty$, which yields, in particular, $u \in L^\infty_t L_x^{d\zeta_x/\zeta_t,\infty}$.}  

\begin{figure}[h!]
		\centering
		\includegraphics[width=0.8\linewidth]{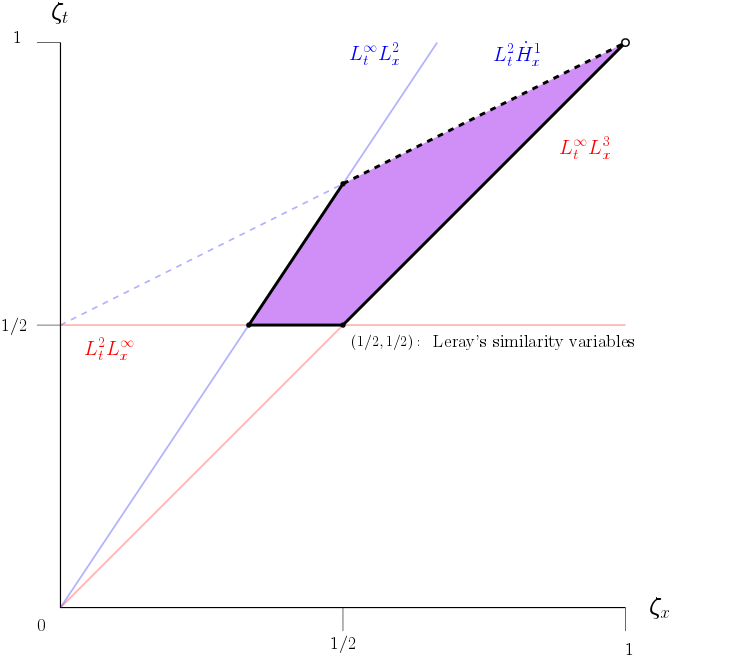}
	\caption{Admissible exponents $(\zeta_x,\zeta_t)$ for a hypothetical singular solution $u$ satisfying the ansatz~\eqref{eq:theexampleansatz}. The purple region denotes those exponents that are consistent with finite energy and do not satisfy the known (critical) regularity criteria.
	\zb{Exponents for $Z^{(k)}_{\bar{\alpha}_k}$ are found on the line corresponding to $L^\I_t L^2_x$. A reduction of the scaling gap would correspond to the sparseness class exponents belonging } \zb{to the interior of the purple region.}
		}
\label{fig:exponents}
\end{figure}

The ansatz~\eqref{eq:theexampleansatz} is by no means comprehensive. \da{For instance, it does not accommodate possible logarithmic corrections to the scaling laws, as are known to arise in the two-dimensional harmonic map heat flow~\cite{weiharmonicmap}, where the proper spatial rescaling is $y = |\log (-t)|^2 x/(-t)$.} \da{It is an interesting and important question to understand whether further analysis of the PDE~\eqref{eq:similaritynavierstokes} may disqualify further exponents.}




\subsubsection*{Acknowledgments}
DA was supported by NSF Postdoctoral Fellowship  Grant No. 2002023. ZB was supported in part by the Simons Foundation via Collaboration Grant No. 635438. The authors thank Zoran Gruji{\'c} and Theo Drivas for feedback on a preliminary version and Tobias Barker and the anonymous referees for pointing out an error in the original manuscript and other valuable comments.

\bibliography{bibliography}
\bibliographystyle{alpha}

\parskip    0mm
\end{document}